\documentclass[12pt]{amsart}
\theoremstyle{definition}
\usepackage{geometry}
\geometry{a4paper, left=1in}
\usepackage{amssymb,latexsym}
\newcommand{\RNum}[1]{\lowercase\expandafter{\romannumeral #1\relax}}
\usepackage{amsthm}
\usepackage{amsmath}
\usepackage{hyperref}
\usepackage{makecell}
\usepackage{xcolor}
\newtheorem{example}{Example}[section]
\hypersetup{colorlinks=true,linkcolor=blue,urlcolor=blue,citecolor=red}
\urlstyle{same}

\newtheorem{theorem}{Theorem}[section]
\newtheorem{remark}{Remark}[section]

\newcommand{\beq}{\begin{equation}}
\newcommand{\eeq}{\end{equation}}

\newtheorem{definition}{Definition}[section]

\title[]{THE TELEPHONE EXCHANGE PROBLEM REVISITED: A combinatorial approach}

\author{Sithembele Nkonkobe}
\address{School of Mathematics, University of Witwatersrand, 2050 Wits, Johannesburg, South Africa }
\email{snkonkobe@gmail.com}

\date{\today}
\subjclass[2010]{}
\keywords{}

\begin{document}
	
	\begin{abstract}
	In this study we revisit the telephone exchange problem. We discuss a generalization of the telephone exchange problem by discussing two generalizations of the Bessel numbers. We study combinatorial properties of these numbers and show how these numbers are related to the well-known Whitney numbers and Dowling numbers. 
	\end{abstract}
	\maketitle

	\section{Introduction}
	The Bessel polynomials are defined as the unique solutions of the differential equation (see~\cite{choi2003unimodality,grosswald1978bessel,krall1949new,yang2011bessel}) \begin{equation}x^2y^{\prime\prime}_n+(2x+2)y^\prime_n=n(n+1)y_n,
	\end{equation}
which is a Sturm-Liouville type second order differential equation.
		These polynomials seem to first appeared in the year 1929 paper \cite{bochner1929sturm}. The first systematic study of these polynomials seems to first been done by Krall and Frink in their 1949 paper \cite{krall1949new}. It is known that the  Bessel polynomial $y_n(x)$ may be written in the form $y_n(x)=\sum\limits_{k=0}^\infty B_{n,k}x^{n-k}$ where $B_{n,k}=\frac{(2n-k)!}{2^{n-k}k!(n-k)!}$, see, for instance, \cite{choi2003unimodality}. The numbers $B_{n,k}$ are known as the Bessel coefficients. For non-negative integers $n$ and $k$, the $(n,k)th$ Bessel number of the second kind $B(n,k)$ is defined as the Bessel coefficient $B_{k,2k-n}$ (see~\cite{choi2003unimodality}).
		
		The generating function of the Bessel numbers of the second kind, $B(n,k)$, is\cite{yang2011bessel}
		\begin{equation}
			\sum\limits_{n=0}^\infty B(n,k)\frac{z^n}{n!}=\frac{1}{k!}\begin{pmatrix}z+\frac{z^2}{2}\end{pmatrix}^k.
		\end{equation}
	 The Bessel numbers of the second kind $B(n,k)$ have the following combinatorial interpretation: They represent the total number of ways of partitioning the set $[n]=\{1,2,\ldots,n\}$ into $k$ non-empty subsets each subset having cardinality of at most two \cite{cheon2013generalized,choi2003unimodality}.
	 
	 	One of the problems of interest in enumerative combinatorics is the telephone exchange problem. The telephone exchange problem is as follows.\begin{definition}[Telephone exchange problem]\cite{riordan1959introduction}\label{definition:10} Suppose a telephone exchange has $n$ subscribers, in how many ways can the subscribers be connected satisfying the following conditions;
	
	a). subscribers may be connected in pairs only i.e. no conference calls are allowed,
	
	b). each of the $n$ subscribers is in one of two states; connected to exactly one other subscriber or not connected to any other subscriber at all.  \end{definition}

\begin{definition}
	We denote by $(i,j)$ to denote that subscribers $i$, and $j$ are connected with each other, where $i$ and $j$ are from the set $[n]$. We also denote by $\overline{i}$ to denote that  subscriber $i$ is not connected to any other subscriber.\end{definition}

We let $T_n$ be the total number of possible configurations in Definition~\ref{definition:10}. We refer to the numbers $T_n$ as telephone numbers. It turns out that the number of ways in which $n-k$ pairs out of $n$ subscribers can be connected by a telephone exchange is given by the Bessel numbers $B(n,k)$ \cite{cheon2013generalized}.
It is known that the telephone numbers $T_n$ have the following closed form, $T_n=\sum\limits^{n}_{k=0}B(n,k)$ where $B(n,k)$ are the Bessel numbers discussed above, see, for instance, \cite{jung2018r,mezo2014periodicity}. 
\begin{example} We demonstrate the  possibilities for two telephone numbers. For $T_2$ we have the following possibilities (\RNum{1}) $\overline{1}$,  $\overline{2}$,
		  (\RNum{2}) $(1,2)$. Hence $T_2=2$. For $T_3$ we have the following possibilities
				(\RNum{1}) $\overline{1}$,$\overline{2}$,$\overline{3}$,
				(\RNum{2}) $\overline{1}$, $(2,3)$,
				(\RNum{3}) $\overline{2}$, $(1,3)$,
				(\RNum{4}) $\overline{3}$, $(1,2)$. Hence, $T_3=4$.
		\end{example}
 In general we have the following \begin{table}[!h]
	\begin{tabular}{|c|c|c|c|c|c|c|c|c|c|c|}
		\hline
		$n$&0&1&2&3&4&5&6&7&8&$\cdots$\\\hline
		$T_n$&1&1&2&4&10&26&76&232&764&$\cdots$\\\hline
	\end{tabular}
	\vspace{3mm}
	
	\caption{Telephone numbers $T_n$.}
\end{table}

The telephone exchange problem has a reach history, see, for instance, \cite{cheon2013generalized,choi2003unimodality} and references therein. The telephone exchange problem seem to first appeared in Riordan's book\cite{riordan1959introduction}. There are several alternative interpretations of the telephone numbers. The telephone numbers $T_n$ also represent the Hosoya index (number of matchings) of the complete graph $K_n$\cite{yang2011bessel}. They also represent the number of involutions of the symmetric group $S_n$ 
 \cite{aigner2007course,bona2015handbook,solomon2005combinatorial,stanleyenumerativeVol2}. They also represent the total number of tableaux that can be formed from $n$ distinct elements\cite{knuth1998art}.

 The Hermite polynomials are defined in the following way\cite{banderier2002generating}
	\begin{equation}
		\sum\limits_{n=0}^\infty H_{e_n}(x)\frac{t^n}{n!}=exp\begin{bmatrix}
			xt-\frac{t^2}{2}
		\end{bmatrix}.
	\end{equation}
Also, the numbers $T_n$ are related to the hermit polynomials in the following way (see~\cite{banderier2002generating}) 
\begin{equation}\label{equation:30}
	T_n=i^nH_{e_n}(-i), \qquad i=\sqrt{-1}.
\end{equation}  

Equation~\ref{equation:30} means that the telephone numbers are the absolute values of the coefficients of the Hermite polynomials.  A problem that can be considered as a generalization of the telephone number problem is that of the restricted Bell numbers studied in \cite{mezo2014periodicity}, which restrict the number of elements each subsets may have.
\begin{definition}\label{definition:45}A generalization of the Bessel numbers of the second kind $B(n,k)$, which is the $r$-Bessel numbers of second kind $B_r(n,k)$ represents the number of partitions of $[n+r]$ elements into $k+r$ subsets of size one or two such that the first $r$ elements are in distinct blocks\cite{jung2018r}. We  denote by $B_r(n)$ the total number of all such possible configurations where $k$ runs from 0 to $n$. The $n+r$ elements denote $n+r$ subscribers. We will refer to the first $r$ subscribers the distinguished subscribers, and the rest as non-distinguished subscribers.\end{definition} The numbers $B_r(n,k)$ have the following generating function(see ~\cite{jung2018r}), 
\begin{equation}
		\sum\limits_{n=0}^\infty B_r(n,k)\frac{z^n}{n!}=\frac{1}{k!}(1+z)^r\begin{pmatrix}z+\frac{z^2}{2}\end{pmatrix}^k.\end{equation}

	The Bessel numbers $B_r(n,k)$ can alternatively be interpreted in the following way. Suppose $G$ is a complete semi-bipartite graph having bipartitions $V_1=\{1,2,\ldots,r\}$, and $V_2=\{r+1,r+2,\ldots,n+r\}$, then $B_r(n,k)$ is the number of matchings of $G$ having $n-k$ edges\cite{cheon2013generalized}. From Definition~\ref{definition:45} we have  $B_r(n)=\sum\limits_{k=0}^nB_r(n,k)$.
		We have (see \cite{cheon2013generalized})
	\begin{equation}
		\sum\limits_{n=0}^\infty B_r(n)\frac{z^n}{n!}=(1+z)^re^{z+z^2/2}.
	\end{equation}
 We will refer to the numbers $B_r(n)$ as the $r$-Bessel numbers. We denote by $i_r$ to indicate that the element $i$ is one of the first $r$ elements. 
 	\begin{example} 

We note that for $B_2(2)$ we have $\{1_2,2_2,1,2\}$. The eight partitions given by $B_2(2)$ are: 
\begin{enumerate}
	\item $\{1_2\}\{2_2\}\{1,2\}$,
	\item $\{1_2\}\{2_2\}\{1\}\{2\}$,
	\item $\{1_2,1\}\{2_2\}\{2\}$,
		\item $\{1_2,2\}\{2_2\}\{1\}$,
	\item $\{1_2,2\}\{2_2,1\}$,
	\item $\{1_2\}\{2_2,2\}\{1\}$,
\item $\{1_2\}\{2_2,1\}\{2\}$,
	\item $\{1_2,2\}\{2_2,1\}$.

\end{enumerate}
 \end{example}
 
 It turns out that $B_2(n)$ has the following sequence
 
 \begin{table}[!h]
 	\begin{tabular}{|c|c|c|c|c|c|c|c|c|c|c|}
 		\hline
 		$n$&0&1&2&3&4&5&6&7&8&$\cdots$\\\hline
 		$B_2(n)$&1&3&8&22&66&206&688&2388&8732&$\cdots$\\\hline
 	\end{tabular}
 	\vspace{3mm}
 	
 	\caption{The numbers $B_2(n)$.}
 \end{table}

  Another generalization of the Bessel Numbers related to the telephone exchange problem is the following (see~\cite{cheon2013generalized}) 
	\begin{equation}
		\sum\limits_{n=0}^\infty T_r(n,k)\frac{z^n}{n!}=e^{rz}\frac{(z+z^2/2)^k}{k!}.
	\end{equation}

\begin{definition}\label{definition:46}The numbers $T_r(n,k)$ represent the number of ways of selecting $n-k$ pairs (number of connections) out of $n+r$ subscribers where $r$ pre-specified subscribers cannot be connected with each other, however every conference call involves one of these $r$ subscribers\cite{cheon2013generalized}. We will refer to the $r$ subscribers as the distinguished subscribers, and the rest as non-distinguished subscribers. We denote by $T_r(n)$ the total number of all possible such configurations, where $k$ runs from 0 to $n$. \end{definition} The numbers $T_r(n)=\sum\limits_{k=0}^nT_r(n,k)$, have also been studied in \cite{cheon2013generalized}. We have (see \cite{cheon2013generalized})
\begin{equation}
	\sum\limits_{n=0}^\infty T_r(n)\frac{z^n}{n!}=e^{(r+1)z+z^2/2}.
\end{equation}
\begin{remark}
	Note that in the context of subsets the main difference between the subsets enumerated  by $B_n(r)$, and those enumerated by $T_r(n)$ is that on the set partitions given by $B_n(r)$ all the subsets have cardinality one or two. Whereas on the subsets enumerated by $T_r(n)$ subsets may have more than two elements as long as in each of those subsets one of the $r$ distinguished elements is contained in that subset.
\end{remark}

 In the telephone exchange problem discussed above it is assumed that all the subscribers are in different locations/one person per device at a time. Even for conference calls at least three devices/phones need to be connected in order for more than two people to be connected. In the above context in a single phone call it is not possible to have more that two subscribers participating in a phone call i.e. when only two devices are connected. In practice a short coming of the telephone exchange problem discussed above is that in reality it is possible in a single phone call to have multiple people participating in a conversation at each of the two ends of a single phone call. Motivated by this problem in this study we define a new type of numbers, that generalizes the telephone exchange problem above, to all for multiple subscribers to be at each end of a phone call. We further generalize to allow for subscribers to be in one of $\lambda$ locations, where each location constitute a different configuration. Also, we generalize the $r$-telephone numbers discussed above, by still requiring that $r$ fixed subscribers not to belong to the same subsets while at the same time allowing multiple subscribers to be involved in same call. The $r$ subscribers could be interpreted as people who have the same skill set, hence need to belong to different teams all the time or people at the same position who cant be in the same team. As expected the $r$-telephone numbers are a special case of the polynomials we define in this study. In this study we also show how our  generalizations of the Bessel numbers and $r$-telephone numbers are related to the Whitney numbers, and Dowling numbers.  

 \vspace{1cm}

	\section{Preliminaries}

	The $r$-stirling numbers ${n+r\brace k+r}_r$ are defined in the following way (see~\cite{broder1984r})
	\begin{equation}\label{equation:40}
		\sum\limits_{n=0}^\infty{n+r\brace k+r}_r\frac{z^n}{n!}=\begin{cases} \frac{e^{rz}(e^z-1)^k}{k!},\quad k\geq 0,\\0,\qquad \text{otherwise}.
		\end{cases}
	\end{equation}
The $r$-stirling numbers ${n+r\brace k+r}_r$ represent the number of ways of partitioning the set $[n+r]$ into $k+r$ non-empty subsets such that the first $r$ elements are in distinct subsets\cite{broder1984r}.

\begin{definition}[barred preferential arrangements] A barred preferential arrangement is an ordered set partition onto which a number of bars are inserted \cite{ahlbach2013barred, pippenger2010hypercube}.  
	\end{definition}	
Barred preferential arrangements seem to first appeared in \cite{pippenger2010hypercube}. 
	We note that $m$ bars induces $m+1$ sections between the bars. The sections are where the blocks of a set partition are ordered.  We now show two examples of barred preferential arrangements.
	\begin{example}We present two examples of barred preferential arrangements of the set $[4]$,
		
		\RNum{1}. $3\quad12|\quad|4,$
	
		\RNum{2}. $| 12\quad 3|\quad|4$.
	\end{example}
The barred preferential arrangement in $\RNum{1}$, has two bars hence, three sections. The first section the one to the left of the first bar from left to right has two blocks which are the singleton $\{3\}$ and the block $\{1,2\}$, the magnitude of the space between the elements indicates which elements are in the same block and which ones are not. The second section, the one between the two bars is empty. The third section has the singleton $\{4\}$. The barred preferential arrangement in $\RNum{2}$ has three bars hence four sections. The first section is empty. The second section has the two blocks $\{1,2\}$, and $\{3\}$. The third section is empty. The fourth section has the singleton $\{4\}$. The polynomials  interpreted as giving the number of barred preferential arrangements have extensively been studied in \cite{kargin2018higher,kargin2016generalization,nkonkobe2020combinatorial}

\begin{definition}\label{definition:4} The $r$-Whitney numbers of the second $\mathcal{W}_{m,r}(n,k)$ are defined as (see~\cite{cheon2012r}) \begin{equation}
	\mathcal{W}_{m,r}(n,k)=\frac{1}{m^kk!}\sum\limits_{t=0}^k(-1)^t\binom{k}{t}[m(k-t)+r]^n.
	\end{equation}
The $r$-Whitney numbers also satisfy the following recurrence relations (see~\cite{cheon2012r}) 
\begin{equation}
\mathcal{W}_{m,r}(n,k)=	\sum\limits_{i=k}^n\binom{n}{i}\mathcal{W}_{m,r-1}(i,k),
\end{equation}
\begin{equation}\label{equation:10}
	(mx+r)^n=\sum\limits_{k=0}^n\mathcal{W}_{m,r}(n,k)m^k(x)_k
.\end{equation}

The numbers $\mathcal{W}_{2,r}(n,k)$ are related to the $r$-bessel numbers in the following way (see \cite{jung2018r})
\begin{equation}
	\mathcal{W}_{2,r}(n,k)=\sum\limits_{i=0}^n{n\brace i}B_r(i,k)
.\end{equation}	The $r$-Whitney numbers of the second kind have the following combinatorial interpretation (see~\cite{gyimesi2019new}). The number 	$\mathcal{W}_{m,r}(n,k)$ is the number of colored partitions of $[n+r]$ into $k+r$ subsets such that:
	\begin{itemize}
		\item the first $r$ elements of $[n+r]$ are in distinct blocks, 
		\item none of the blocks containing the distinguished elements are colored (we will refer to these as distinguishable blocks), 
		\item 	 the smallest element in each non-distinguishable block is not colored,
		\item  all other elements in non-distinguishable blocks are colored with one of $m$ colors independently.
		
	\end{itemize}
\end{definition}
\begin{definition}
The $r$-Dowling polynomials $\mathcal{D}_{m,r}^x(n)$ are defined in the following two ways (see~\cite{corcino2018some,gyimesi2018comprehensive})
\begin{equation}\label{equation:4}
	\mathcal{D}_{m,r}^x(n)=\sum\limits_{k=0}^n\mathcal{W}_{m,r}(n,k)x^k,
\end{equation}
\begin{equation}\label{equation:220}
	\sum\limits_{n=0}^\infty\mathcal{D}_{m,r}^x(n)\frac{z^n}{n!}=exp\begin{bmatrix}
		rz+\frac{x(e^{mz}-1)}{m}
	\end{bmatrix}.
\end{equation}
For $x=1$ we denote  $\mathcal{D}_{m,r}^x(n)=\mathcal{D}_{m,r}(n)$.
\end{definition}A combinatorial interpretation of the numbers $\mathcal{D}_{m,r}(n)$ follows from that of  $\mathcal{W}_{m,r}(n,k)$ above, by adding the extra condition that all the $k$ non-distinguishable blocks are each independently  colored with one of $x$ colors. Now summing over $k$ from $0$ to $n$ gives the combinatorial interpretation of the $r$-Dowling polynomials $\mathcal{D}_{m,r}(n)$ (see~\cite{corcino2018some,gyimesi2018comprehensive}). We let $\mathbb{D}_{m,r}(n)$ be the set of these arrangements i.e $|\mathbb{D}_{m,r}(n)|=\mathcal{D}_{m,r}(n)$.
	\begin{definition}The $r$-Bell numbers $B_r^\lambda(n)$ are defined as (see~\cite{mezo2011r})
	\begin{equation}\label{equation:35}
			\sum\limits_{n=0}^\infty \mathcal{B}_r^\lambda(n)\frac{z^n}{n!}=exp[\lambda(e^z-1)+rz]. 
		\end{equation}
	The numbers $\mathcal{B}_r^\lambda(n)$ represent the number of ways of partitioning the set $[n+r]$ into $k+r$ subsets such that the first $r$ elements are in different subsets, where $k$ run from 0 to $n$.\cite{mezo2011r}. 
	\end{definition}

	\section{Generalized $r$-Bessel numbers}
	
 In this section we prove some combinatorial identities regarding the numbers $B_r(1;n)$.
 
 	\begin{definition}\label{definition:1}
 	Let $B_r(1,n)$  represent the number of ways of partitioning the set $[n+r]$ satisfying the following three conditions:
 	
 	\RNum{1}. $[n+r]$ is partitioned into $k+r$ subsets, such that the first $r$ elements are in distinct blocks. We will refer to the blocks containing the first $r$ elements as the distinguishable blocks. We will refer to the $r$ elements as the distinguishable elements, and the other $n$ elements as non-distinguishable elements.
 	
 	\RNum{2}. each of the $k+r$ blocks is in one of two states; it is either paired up with exactly one other block or is not paired with any block at all,
 	
 	\RNum{3} none of the $r$ distinguishable blocks can be paired with each other.
 \end{definition}
 
 We will refer to the partitions of the set $[n+r]$ given in Definition~\ref{definition:1} as $(B,r)$-partitions of $[n+r]$.

	
\begin{theorem}For $n,r\geq0$, we have
	\begin{equation}
		B_r(1,n)=\sum\limits_{k=0}^n\binom{n}{k} r^{n-k}B_0(1,k).
	\end{equation}
\end{theorem}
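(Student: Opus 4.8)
The plan is to give a direct combinatorial proof, reading the right-hand side as a decomposition of each $(B,r)$-partition of $[n+r]$ according to which non-distinguishable elements interact with the $r$ distinguishable blocks and which do not. The factor $r^{n-k}$ should record a free assignment of the interacting elements to the $r$ distinguished anchors, while $\binom{n}{k}B_0(1,k)$ should record a self-contained $(B,0)$-partition on the remaining elements.

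First I would fix a $(B,r)$-partition $\pi$ and identify the $n$ non-distinguishable elements with $[n]$. Let $A\subseteq[n]$ consist of those non-distinguishable elements lying in a block that is paired with one of the distinguishable blocks $\{i\}$, and set $K=[n]\setminus A$. Because condition \RNum{3} forbids pairing two distinguishable blocks, each $\{i\}$ is paired with at most one non-distinguishable block $B_i$; these partner blocks are pairwise disjoint with $A=\bigcup_{i=1}^r B_i$. Hence the way the elements of $A$ sit relative to the distinguishable blocks is recorded exactly by the function $f\colon A\to[r]$ with $f(x)=i$ iff $x\in B_i$, a distinguishable block being left unpaired precisely when its preimage under $f$ is empty. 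Since each distinguishable block absorbs at most one partner and that partner is a single block, every function $f$ is admissible and determines the pairing pattern uniquely, contributing the factor $r^{|A|}$ once $A$ is chosen.

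Next I would check that deleting the distinguishable blocks together with their partners leaves, on $K$, precisely a $(B,0)$-partition: the surviving blocks contain no distinguishable element and are paired only among themselves. Writing $k=|K|$, this sets up a bijection between $(B,r)$-partitions of $[n+r]$ and triples consisting of a $k$-subset $K\subseteq[n]$, a $(B,0)$-partition of $K$, and a function $f\colon[n]\setminus K\to[r]$. Counting the triples gives $\binom{n}{k}$ choices for $K$, then $B_0(1,k)$ internal structures and $r^{n-k}$ functions, and summing over $k$ yields
\begin{equation*}
B_r(1,n)=\sum_{k=0}^n\binom{n}{k}r^{n-k}B_0(1,k),
\end{equation*}
with the case $r=0$ recovered trivially from $0^{0}=1$.

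The step I expect to be the main obstacle is the bookkeeping that makes the correspondence a genuine bijection: one must confirm that encoding the interaction with the distinguishable blocks by an arbitrary function $f\colon A\to[r]$ neither overcounts nor undercounts. This relies on each distinguishable block being paired with at most one block (condition \RNum{2}) and on the preimages $f^{-1}(i)$ reconstructing those partner blocks, with the empty preimage signalling an unpaired distinguishable block; one must also be careful that the distinguishable blocks themselves do not secretly absorb further non-distinguishable elements, which would spoil the exponent. As an independent check that sidesteps this bookkeeping, I would note the generating-function route: an exponential-formula argument gives $\sum_{n\ge0}B_r(1,n)\frac{z^n}{n!}=e^{rz}\sum_{n\ge0}B_0(1,n)\frac{z^n}{n!}$, whose coefficient extraction is exactly the binomial convolution above.
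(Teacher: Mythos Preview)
Your flagged concern is precisely where the argument breaks, and it cannot be repaired: under the definition of $B_r(1,n)$ the distinguishable blocks need not be singletons, so a non-distinguishable element may sit \emph{inside} a distinguishable block rather than in a block merely paired with one. Such elements are absent from your $A$ and hence land in $K$, yet ``deleting the distinguishable blocks together with their partners'' removes them, so what survives is not a partition of $K$ and the proposed bijection fails. Your generating-function check inherits the same hidden singleton assumption and so does not provide an independent verification.

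In fact the identity as printed is false: for $n=r=1$ there are three $(B,1)$-partitions of $\{1,2\}$ (namely $\{1,2\}$ unpaired; $\{1\},\{2\}$ both unpaired; $\{1\},\{2\}$ paired), while the right-hand side gives $B_0(1,0)+B_0(1,1)=2$. The paper's own proof has the mirror-image oversight---it places the $n-k$ elements \emph{into} the distinguishable blocks and never accounts for a distinguishable block being paired with a non-distinguishable one. The correct decomposition gives each non-distinguishable element attached to a distinguished anchor $i$ one of two independent choices (in $i$'s block, or in the block paired with $i$), yielding
\[
B_r(1,n)=\sum_{k=0}^{n}\binom{n}{k}(2r)^{\,n-k}B_0(1,k),
\]
equivalently $\sum_{n\ge 0}B_r(1,n)\,z^n/n!=e^{2rz}\exp\bigl((e^{2z}-1)/2\bigr)$, which is consistent with the factor $2r$ appearing in the recursions established immediately afterwards.
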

	\begin{proof}
		Given the set $[n+r]$. There are $\binom{n}{k}$ ways of choosing $k$ non-distinguishable elements. The number of $(B,r)$-partitions such that none of the $k$ non-distinguishable elements go into the $r$ distinguishable blocks is $B_0(1,k)$. The remaining $n-k$ non-distinguishable elements can go into the $r$ distinguishable blocks in $r^{n-k}$. Taking the product and summing over $k$ completes the proof.  
	\end{proof}

\begin{theorem}For $n\geq0$ and $r\geq1$ we have
	\begin{equation}
		B_r(1,n+1)=2rB_r(1,n)+\sum_{k=0}^{n}\binom{n}{k}2^kB_r(1,n-k).
	\end{equation}
\end{theorem}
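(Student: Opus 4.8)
The plan is to argue combinatorially by marking the last non-distinguishable element, which I will call $p$, and sorting the $(B,r)$-partitions of $[(n+1)+r]$ according to the block $N$ that contains $p$ and the way $N$ is paired. There are three mutually exclusive and exhaustive cases: \emph{(Type 1)} $N$ is one of the $r$ distinguishable blocks; \emph{(Type 2)} $N$ is non-distinguishable and is paired with one of the $r$ distinguishable blocks; and \emph{(Type 3)} $N$ is non-distinguishable and is either unpaired or paired with another non-distinguishable block. I will show that Type 1 and Type 2 each contribute $rB_r(1,n)$, while Type 3 contributes the convolution sum, which together give the claimed identity.

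First I would dispose of Type 1. Deleting $p$ from its distinguishable block $D_i$ leaves $D_i\setminus\{p\}$, which still contains the $i$-th distinguishable element and keeps its pairing, so what remains is a $(B,r)$-partition of the other $n+r$ elements; conversely, inserting $p$ into the $i$-th distinguishable block of any $(B,r)$-partition of $[n+r]$ is legal and inverts the deletion, so Type 1 is counted by $rB_r(1,n)$. For Type 3, I would first choose the set $S$ of non-distinguishable elements other than $p$ that lie in $p$'s block $N$ or in a non-distinguishable block paired with $N$; if $|S|=k$ there are $\binom{n}{k}$ such choices. Each element of $S$ then independently lies either in $N$ or in the partner block $W$, and the single assignment in which every element of $S$ goes to $N$ is exactly the case where $N$ is unpaired; this gives $2^{k}$ configurations. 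The elements outside this component form an arbitrary $(B,r)$-partition of the remaining $n-k$ elements, so Type 3 contributes $\sum_{k=0}^{n}\binom{n}{k}2^{k}B_r(1,n-k)$.

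The step I expect to be the main obstacle is Type 2, where one must produce a clean bijection onto $\{1,\dots,r\}\times(\text{$(B,r)$-partitions of }[n+r])$ despite the fact that the block $N\ni p$ may carry other elements. My plan is again to delete $p$ from $N$ while tracking its distinguishable partner $D_i$: if $N=\{p\}$ the block $N$ vanishes and $D_i$ becomes unpaired, whereas if $N\supsetneq\{p\}$ the block $N\setminus\{p\}$ survives and stays paired with $D_i$. The inverse map then splits according to the status of $D_i$ in a given $(B,r)$-partition of $[n+r]$: if $D_i$ is unpaired we create the new singleton $\{p\}$ and pair it with $D_i$, and if $D_i$ is paired with a non-distinguishable block $M$ we instead place $p$ into $M$. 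The rule that no two distinguishable blocks may be paired guarantees $D_i$ is never paired with another distinguishable block, so these two subcases are exhaustive and the correspondence is a bijection; hence Type 2 also contributes $rB_r(1,n)$, and adding the three contributions gives $B_r(1,n+1)=2rB_r(1,n)+\sum_{k=0}^{n}\binom{n}{k}2^{k}B_r(1,n-k)$. As an independent check, this recurrence is equivalent to the first-order relation $b'(z)=(2r+e^{2z})b(z)$ for $b(z)=\sum_{n}B_r(1,n)z^{n}/n!$, which is consistent with the closed form $b(z)=\exp\!\left(2rz+(e^{2z}-1)/2\right)$.
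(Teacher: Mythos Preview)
Your argument is correct and follows the same combinatorial strategy as the paper: classify the $(B,r)$-partitions of $[(n+1)+r]$ according to the block containing the new element $(n+1)$. The paper uses only two cases---$(n+1)$ in a distinguishable block (contributing $2rB_r(1,n)$) and $(n+1)$ in a non-distinguishable block (contributing the convolution sum)---whereas you split the first of these into your Type~1 and Type~2. Your explicit bijection for Type~2 is precisely what justifies the factor $2$ that the paper simply asserts (``Block $B$ may be in one of two states either it is paired or not paired with another block''), and your handling of the unpaired subcase inside Type~3 via the all-into-$N$ assignment is likewise something the paper leaves implicit. So the two proofs are the same in spirit, but your three-type decomposition is the more careful execution; the generating-function check you append is a nice bonus not present in the paper.
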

\begin{proof}
In the $(B,r)$-partitions of $[n+r+1]$ we consider two cases.	

Case 1: The element $(n+1)$ is in one of the $r$ distinguished blocks. The number of  $(B,r)$-partitions of the set $[n+r]$ that can be formed is $B_r(1,n)$. Now the distinguishable block that $(n+1)$ belongs can be chosen in one of $r$ possible ways, we name it as block $B$. Block $B$ may be in one of two states either it is paired or not paired with another block. Hence, there are a total of $2rB_r(1,n)$ possibilities in this case.

Case 2: $(n+1)$ is in a non-distinguishable block, we name it as block $D$.  We name the block paired with block $D$, as block $C$. We choose in $\binom{n}{k}$ ways, elements that are in either block $D$ or Block $C$. Now the $k$ elements can be distributed among the two blocks in $2^k$ ways. With the remaining $n-k$ elements, $(B,r)$-partitions can be formed in $B_r(1,n-k)$ ways.   
\end{proof}
\begin{theorem}For $n\geq0$ and $r\geq1$ we have
	\begin{equation}
		B_r(1,n+1)=2rB_r(1;n)+B_{r+1}(1;n).
	\end{equation}
\end{theorem}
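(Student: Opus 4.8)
The plan is to deduce the identity from the recurrence proved in the preceding theorem together with a single auxiliary combinatorial identity, rather than by re-running a case analysis on where a new element lands. The preceding theorem already gives
\[
B_r(1,n+1)=2rB_r(1,n)+\sum_{k=0}^{n}\binom{n}{k}2^kB_r(1,n-k),
\]
so it suffices to prove the one identity
\[
B_{r+1}(1,n)=\sum_{k=0}^{n}\binom{n}{k}2^kB_r(1,n-k).
\]
Substituting the latter into the former collapses the sum and yields exactly $B_r(1,n+1)=2rB_r(1,n)+B_{r+1}(1,n)$, which is the claim. So the whole proof reduces to interpreting $\sum_{k}\binom{n}{k}2^kB_r(1,n-k)$ as the number of $(B,r+1)$-partitions of $[n+r+1]$.

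To establish the auxiliary identity I would build an arbitrary $(B,r+1)$-partition of $[n+r+1]$ and single out the distinguishable block $B$ containing the $(r{+}1)$-st distinguishable element. Because $B$ is distinguishable it cannot be paired with any other distinguishable block, so whenever $B$ is paired its partner block $C$ consists solely of non-distinguishable elements. Let $k$ be the number of non-distinguishable elements (out of the $n$ available) lying in $B\cup C$; these are selected in $\binom{n}{k}$ ways. The key observation is that assigning each of these $k$ elements independently to $B$ or to $C$ accounts, in exactly $2^k$ ways, for the state of $B$ as well: the single assignment placing all $k$ elements into $B$ leaves $C$ empty and records $B$ as \emph{unpaired}, while each of the other $2^k-1$ assignments makes $C$ nonempty and records $B$ as \emph{paired} with $C$.

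Removing $B$ and $C$ then leaves the first $r$ distinguishable elements together with the remaining $n-k$ non-distinguishable elements, and I would verify that this residuum is an arbitrary $(B,r)$-partition, counted by $B_r(1,n-k)$. Conversely, any $(B,r)$-partition together with a choice of the $k$ elements and their $B/C$ assignment reconstructs a unique $(B,r+1)$-partition, so the correspondence is a bijection; multiplying the three factors $\binom{n}{k}$, $2^k$, $B_r(1,n-k)$ and summing over $k$ gives the identity.

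The step I expect to require the most care is confirming that detaching $B$ and $C$ really does return a genuine $(B,r)$-partition with no dangling constraints. In particular I must check that none of the first $r$ distinguishable blocks could have been paired with $C$ — they cannot, since $C$ is already paired with $B$ — so that all remaining pairings are internal to the residuum, and that the first $r$ distinguishable blocks remain in distinct blocks and unpairable with one another. I would also dispatch the boundary case $k=0$, where $B$ is the unpaired singleton consisting of the $(r{+}1)$-st distinguishable element alone. Once this bookkeeping is pinned down, the uniform $2^k$ count is the clean idea that carries the whole argument.
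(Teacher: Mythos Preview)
Your argument is correct. The auxiliary identity
\[
B_{r+1}(1,n)=\sum_{k=0}^{n}\binom{n}{k}2^{k}B_r(1,n-k)
\]
holds for exactly the reason you give: isolating the $(r{+}1)$-st distinguishable block $B$ together with its (possibly empty) partner $C$, the $k$ non-distinguishable elements in $B\cup C$ receive a $B/C$ label in $2^{k}$ ways, the all-$B$ assignment encoding ``unpaired''; the residuum is a genuine $(B,r)$-partition on $n-k$ non-distinguishable elements because $C$, being already matched to $B$, cannot have been paired with any of the first $r$ distinguishable blocks. Substituting this into the preceding recurrence yields the claim.

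The paper takes a different but closely related route: it argues directly by a two-case analysis on the position of the new element $(n{+}1)$ in a $(B,r)$-partition of $[n{+}1{+}r]$, obtaining $2rB_r(1;n)$ when $(n{+}1)$ falls in (the piece attached to) one of the $r$ distinguishable blocks, and $B_{r+1}(1;n)$ when $(n{+}1)$ lies in a non-distinguishable block, by promoting that block to a new $(r{+}1)$-st distinguishable block. Your approach instead leverages the previous theorem and isolates the link between the two recursions as a stand-alone identity; the $2^{k}$ labeling device you use is precisely the mechanism appearing in Case~2 of the paper's proof of the preceding theorem, only with the $(r{+}1)$-st distinguishable element in place of $(n{+}1)$. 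So the underlying combinatorics is the same; what you gain is an explicit explanation of why the two consecutive theorems differ only in how their ``Case~2'' term is packaged, at the cost of depending on the earlier result rather than being self-contained.
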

\begin{proof}
	In the $(B,r)$-partitions of $[n+r+1]$ we consider two cases.
	
	Case 1: The element $(n+1)$ is in one of the $r$-distinguishable blocks, Of which there are $r$ choices. The block having $(n+1)$  may or may not be paired with another block.
	
	Case 2: $(n+1)$ is in a non-distinguishable block. In a way the block containing $(n+1)$ can be viewed as the $(r+1)th$ distinguishable block, from the fact that it contains $(n+1)$. So the $(B,\lambda)$-partitions of the $[n+r]$ non-distinguishable other elements may be formed in $B_{r+1}(1;n)$ ways, where the block containing $(n+1)$ is the $(r+1)th$ distinguishable block.  
\end{proof}

	\section{Further generalized $r$-Bessel Numbers}

\subsection{Generalised Bessel numbers}
In this section we provide combinatorial identities concerning the numbers $B_r^\lambda(1;n)$.

Let $(n)_j$ denote the falling factorial $n(n-1)(n-2)\cdots(n-j+1)$.

	\begin{definition}\label{definition:3}
	Let  $B_r^\lambda(1;n)$ represent the number of ways of partitioning the set $[n+r]$ satisfying the following conditions:
	
	\RNum{1}. $[n+r]$ is partitioned into $k+r$ subsets of size one or two where $k$ runs from 0 to $n$, such that the first $r$ elements are in distinct blocks.
	
		\RNum{2}. each of the $k+r$ blocks is in one of two states; it is either paired up with exactly one other block or is not paired with any block at all,
	
		\RNum{3} none of the $r$ distinguishable blocks can be paired with each other,
	
	\RNum{4}. each of the $k$ non-distinguishable blocks are place in one of $\lambda$ sections,
	
	
\end{definition}

We will refer to the partitions of the set $[n+r]$ given in Definition~\ref{definition:3} as the $(B,r,\lambda)$-partitions of $[n+r]$. We let $\mathbb{B}_r^\lambda(1;n)$ denote the set of these partitions. The definition of the numbers $B_r^\lambda(1;n)$ above is a generalization of the numbers $T_r(n)$ in \cite{jung2018r}.
\begin{theorem}For $r,n,\lambda\geq0$
	\begin{equation}
		B_r^\lambda(1;n)=\sum\limits_{i=0}^r\binom{r}{i}(n)_iB_0^\lambda(1;n-i).
	\end{equation}
\end{theorem}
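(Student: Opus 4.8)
The plan is to give a direct combinatorial decomposition of the $(B,r,\lambda)$-partitions of $[n+r]$, organised by the sizes of the $r$ distinguishable blocks. Since every block has size one or two and the distinguishable elements lie in distinct blocks, each distinguishable element $d$ is either a singleton $\{d\}$ or sits in a two-element block $\{d,x\}$ with $x$ a non-distinguishable element. I would stratify the count by the number $i$ of distinguishable elements occupying a block of size two, for $i=0,1,\dots,r$.

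For a fixed $i$, I would first choose which $i$ of the $r$ distinguishable elements occupy two-element blocks, giving the factor $\binom{r}{i}$. Next I would select their non-distinguishable partners: because the $i$ chosen distinguishable elements are individually labelled, assigning to each a distinct non-distinguishable partner is exactly an injection from an $i$-set into the $n$ non-distinguishable elements, which accounts for the falling factorial $(n)_i$. The remaining $r-i$ distinguishable elements are then forced to be singletons and contribute no further choices.

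The heart of the argument is to show that the leftover $n-i$ non-distinguishable elements must independently form a $(B,0,\lambda)$-partition, contributing the factor $B_0^\lambda(1;n-i)$; multiplying the three factors and summing over $i$ then yields the claimed identity. The step I expect to require the most care is justifying this factorisation, namely that a configuration on $[n+r]$ splits cleanly into (a choice of distinguishable two-blocks together with their partners) times (a self-contained $(B,0,\lambda)$-partition on the remaining non-distinguishable elements). Concretely, one must verify that the distinguishable blocks do not couple to the rest of the arrangement: they receive no section label, and they are peeled off before the pairing and $\lambda$-section structure is imposed on the $n-i$ leftover elements, so that this remaining structure is free and is counted precisely by $B_0^\lambda(1;n-i)$. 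The condition that the distinguishable blocks may not be paired is exactly what allows me to treat them as inert and detach them from the remaining configuration without over- or under-counting.

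As an independent check, I would recast the identity in exponential generating function form. Writing $G_r(z)=\sum_{n\ge 0} B_r^\lambda(1;n)\,z^n/n!$, the claimed recurrence is equivalent to $G_r(z)=(1+z)^r\,G_0(z)$, where the factor $(1+z)$ per distinguishable element records the dichotomy ``lone singleton'' (the $1$) versus ``two-element block absorbing one non-distinguishable element'' (the $z$). This mirrors the factor $(1+z)^r$ already present in the generating function of the $r$-Bessel numbers $B_r(n,k)$, and so corroborates the combinatorial decomposition above.
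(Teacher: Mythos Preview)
Your proposal is correct and follows essentially the same decomposition as the paper's own proof: stratify by the number $i$ of distinguishable blocks of size two, choose them in $\binom{r}{i}$ ways, fill them with non-distinguishable partners in $(n)_i$ ways, and count the remaining structure on the other $n-i$ non-distinguishable elements by $B_0^\lambda(1;n-i)$. The paper states exactly these three factors and sums over $i$; your write-up simply adds the justification for why the factorisation is clean and the generating-function cross-check $G_r(z)=(1+z)^rG_0(z)$, neither of which appears in the paper.
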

\begin{proof}
	On the $(B,r,\lambda)$-partitions of $[n+r]$, out of the $r$ distinguishable blocks, say there are $i$ of them that are having two elements. The $i$ distinguishable blocks can be chosen in $\binom{r}{i}$ ways. From the non-distinguishable elements $[n]$, $i$ elements can be chosen and placed on the $i$ chosen distinguishable blocks in $(n)_i$ ways.  Using the remaining $n-i$ non-distinguishable blocks $(B,r,\lambda)$-partitions can be formed in $B_0^\lambda(1;n-i)$. Summing over $r$ completes the proof.
\end{proof}
\begin{theorem}For $r\geq0$, and $n,\lambda\geq1$ we have
	\begin{equation}
		B_r^\lambda(1;n+1)=rB_r^\lambda(1;n)+\lambda B_r^\lambda(1;n)+n\lambda B_r^\lambda(1;n-1).
	\end{equation}
\end{theorem}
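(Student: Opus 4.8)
The plan is to give a combinatorial proof in the style of the preceding theorems, by conditioning on the role played by the last non-distinguishable element $(n+1)$ in a $(B,r,\lambda)$-partition of $[n+r+1]$. Every such partition falls into exactly one of three mutually exclusive and exhaustive cases according to what happens to $(n+1)$, and these three cases will produce the three summands $rB_r^\lambda(1;n)$, $\lambda B_r^\lambda(1;n)$, and $n\lambda B_r^\lambda(1;n-1)$ respectively.

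First I would treat the case where $(n+1)$ is attached to one of the $r$ distinguishable blocks. There are $r$ choices of distinguishable block, and after recording this choice and deleting $(n+1)$, what remains is a $(B,r,\lambda)$-partition of $[n+r]$; conversely, any $(B,r,\lambda)$-partition of $[n+r]$ together with a choice of one of the $r$ distinguishable blocks reconstructs a unique partition of this type. This bijection yields the term $rB_r^\lambda(1;n)$. Next, if $(n+1)$ sits in a non-distinguishable block by itself, that block is placed in one of the $\lambda$ sections; deleting it leaves a $(B,r,\lambda)$-partition of $[n+r]$, giving $\lambda B_r^\lambda(1;n)$. Finally, if $(n+1)$ lies in a non-distinguishable block together with one of the other $n$ non-distinguishable elements, there are $n$ choices for this partner and $\lambda$ choices of section for the resulting block; deleting both elements leaves a $(B,r,\lambda)$-partition of $[(n-1)+r]$, giving $n\lambda B_r^\lambda(1;n-1)$. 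Adding the three contributions gives the stated identity.

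The step I expect to be the main obstacle is the first case, the one responsible for the factor $r$. For the deletion--insertion correspondence there to be a genuine bijection with multiplicity exactly $r$, I must be sure that $(n+1)$ can be attached to any of the $r$ distinguishable blocks without constraint and that this is reversible independently of the current state of that block; this is precisely the point at which the conventions of Definition~\ref{definition:3} (how a non-distinguishable element may join a distinguishable block, and how the pairing and section data are carried along) must be invoked carefully, since a naive reading that caps block sizes would make some insertions illegal and spoil the clean factor $r$. Checking exhaustiveness and disjointness of the three cases, and that the section and pairing data are transported correctly under each deletion, is routine once this first correspondence is pinned down. As an independent check I would also verify the recurrence at the level of exponential generating functions: writing $f_r(z)=\sum_{n\ge0}B_r^\lambda(1;n)z^n/n!$, the identity is equivalent to the differential relation $f_r'(z)=(r+\lambda+\lambda z)f_r(z)$, which is immediate to confirm term by term and fixes $f_r$ up to the initial value $B_r^\lambda(1;0)=1$.
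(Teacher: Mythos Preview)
Your three-case decomposition on the position of $(n+1)$---in one of the $r$ distinguishable blocks, as a non-distinguishable singleton in one of the $\lambda$ sections, or in a two-element non-distinguishable block in one of the $\lambda$ sections---is exactly the paper's own proof, producing the three summands $rB_r^\lambda(1;n)$, $\lambda B_r^\lambda(1;n)$, and $n\lambda B_r^\lambda(1;n-1)$ in the same order. Your explicit caution about the insertion step in Case~1 and the supplementary generating-function check $f_r'(z)=(r+\lambda+\lambda z)f_r(z)$ go slightly beyond the paper, which handles that step without comment.
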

\begin{proof}
We partition the proof into three cases. 

Case 1: $(n+1)$ is in one of the $r$ distinguished blocks. There are $r$ possible ways of placing $(n+1)$. With the remaining $n$ elements $(B,r,\lambda)$-partitions can be formed in $B_r^\lambda(1;n)$ ways.

Case 2: $(n+1)$ is in one of the $\lambda$ sections, and is a singleton. There are $\binom{\lambda}{1}$ ways of choosing the section onto which $(n+1)$ is in. Then using the remaining $n$ elements there are $B_r^\lambda(1;n)$ ways of forming $(B,r,\lambda)$-partitions.

Case 3: $(n+1)$ is in one of the $\lambda$ sections, and is in a subset of size two. There are $\binom{\lambda}{1}$ ways to choose the section onto which $(n+1)$ belongs. We let $L$ denote the block onto which $(n+1)$ belongs.  There are ${n}$ ways of choosing the other element that is also part of $L$. Using the remaining $n-1$ non-distinguishable elements, $(B,r,\lambda)$-partitions can be formed in $B_r^\lambda(1;n-1)$ ways. 
	\end{proof}

\begin{theorem}For $r\geq0$, and $n,\lambda\geq1$ we have
	\begin{equation}
		B_r^\lambda(1;n+1)= rB_{r-1}^\lambda(1;n)+\lambda B_{r+1}^\lambda(1;n).
	\end{equation}
\end{theorem}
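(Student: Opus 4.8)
The plan is to classify the $(B,r,\lambda)$-partitions of $[n+r+1]$ according to the block that contains the last non-distinguishable element, which I will call $(n+1)$, exactly in the spirit of the preceding theorems. Since every block has size one or two and each of the first $r$ elements occupies its own block, the block containing $(n+1)$ either also contains one of the $r$ distinguishable elements or contains no distinguishable element at all. These two possibilities are mutually exclusive and exhaustive, and they will produce the two summands on the right-hand side.

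In the first case the block containing $(n+1)$ is $\{d,n+1\}$ for one of the $r$ distinguishable elements $d$, which gives $r$ choices. I would then delete this entire block and argue that what remains is precisely a $(B,r-1,\lambda)$-partition of the remaining $r-1$ distinguishable and $n$ non-distinguishable elements, that is, of a set of size $n+r-1$; conversely, any such partition together with a choice of $d$ reconstructs the original arrangement uniquely. This bijection yields the contribution $rB_{r-1}^\lambda(1;n)$.

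In the second case the block $L$ containing $(n+1)$ is a non-distinguishable block and therefore lies in one of the $\lambda$ sections. Here the key idea is to promote $L$ to the role of an $(r+1)$-th distinguishable block, regarding $(n+1)$ as the new distinguishable element. If $L=\{n+1\}$ this produces a distinguishable singleton, whereas if $L=\{n+1,x\}$ it produces a distinguishable block of size two whose non-distinguishable partner is $x$; these are exactly the two shapes a distinguishable block may assume in a member of $\mathbb{B}_{r+1}^\lambda(1;n)$ over the same $n+r+1$ elements, now regarded as having $r+1$ distinguishable elements. Because promotion strips $L$ of its section label, I would record that label separately, which accounts for the factor $\lambda=\binom{\lambda}{1}$. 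Adding the two contributions then gives the claimed identity.

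The step I expect to require the most care is the second case: I must verify that the promotion map is a genuine bijection onto all of $\mathbb{B}_{r+1}^\lambda(1;n)$, i.e. that allowing the newly distinguishable block to be either a singleton or a pair reproduces, bijectively and without overcounting, the size-one and size-two possibilities for $L$, and that the single factor of $\lambda$ exactly compensates for the discarded section. Once this bijection is established, the first case is routine, and the disjointness of the two cases closes the argument.
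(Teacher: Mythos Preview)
Your proposal is correct and follows exactly the same two-case decomposition as the paper: split according to whether $(n+1)$ lies in a distinguishable block $\{d,n+1\}$ (delete it to obtain a $(B,r-1,\lambda)$-partition) or in a non-distinguishable block $L$ in some section (promote $L$ to the $(r+1)$-th distinguishable block, recording the lost section label as the factor $\lambda$). Your write-up is in fact more explicit than the paper's about why the promotion in Case~2 is a bijection and why the factor $\lambda$ appears, but the underlying argument is identical.
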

\begin{proof}
	In the $(B,r,\lambda)$-partitions of $[n+r+1]$ we consider two cases.
	
	Case 1: The element $(n+1)$ is in one of the $r$-distinguishable blocks. Of which there are $r$ possible ways. From the set $[n+r-1]$, the number of $(B,r,\lambda)$-partitions that can be formed is $B_{r}^\lambda(1;n)$ where the $rth$ distinguishable block is the one having $(n+1)$. 
	
	Case 2: $(n+1)$ is in a non-distinguishable block in one of the $\lambda$ sections. In a way the block containing $(n+1)$ can be viewed as the $(r+1)th$ distinguishable block. So the number of $(B,r,\lambda)$-partitions of  $[n+r]$ that can be formed is $B_{r+1}^\lambda(1;n)$, where the block containing $(n+1)$ is the $(r+1)th$ distinguishable block.  
\end{proof}
\begin{theorem}For $r\geq0$, and $n,\lambda\geq1$ we have
	\begin{equation}
		B_r^\lambda(1;n+1)= rB_{r-1}^\lambda(1;n)+\lambda\sum\limits_{i=0}^{r+1}\binom{r+1}{i}(n)_iB_r^\lambda(1;n-i).
	\end{equation}
\end{theorem}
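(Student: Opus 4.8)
The plan is to derive this identity by feeding the closed form of the first theorem of this subsection into the recurrence of the immediately preceding theorem; equivalently, one runs the same two-case argument on the $(B,r,\lambda)$-partitions of $[n+r+1]$ that produced that recurrence and refines the second case. I would organise everything around the fate of the last non-distinguishable element $n+1$.

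First I would reuse the first case verbatim: if $n+1$ lies in one of the $r$ distinguishable blocks, there are $r$ choices for that block, and deleting $n+1$ (together with its host) leaves a $(B,r-1,\lambda)$-partition of the surviving points, contributing $rB_{r-1}^\lambda(1;n)$. This is exactly the first summand in the preceding theorem, so no new work is needed here.

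The substance is in the second case, where $n+1$ lies in a non-distinguishable block. As in the preceding theorem, that block is assigned to one of the $\lambda$ sections (this furnishes the outer factor $\lambda$) and may be regarded as an extra, $(r+1)$-st distinguishable block, so the residual structure on the remaining $n$ non-distinguishable points is counted by $B_{r+1}^\lambda(1;n)$. Rather than stopping at $\lambda B_{r+1}^\lambda(1;n)$, I would expand $B_{r+1}^\lambda(1;n)$ by the first theorem of this subsection with $r$ replaced by $r+1$: among these $r+1$ distinguishable blocks, choose in $\binom{r+1}{i}$ ways the $i$ that are paired with a second element, select and order their partners among the remaining points in $(n)_i$ ways, and count the leftover configuration by the corresponding term $B_r^\lambda(1;n-i)$. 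Summing over $i$ converts $\lambda B_{r+1}^\lambda(1;n)$ into $\lambda\sum_{i=0}^{r+1}\binom{r+1}{i}(n)_iB_r^\lambda(1;n-i)$, and adding back the first-case term yields the claim.

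The one step to watch is the bookkeeping in the second case: after promoting the block of $n+1$ to the $(r+1)$-st distinguishable block and assigning its $i$ ordered partners, one must check that the remaining $n-i$ points index a genuinely independent sub-partition, so that $\binom{r+1}{i}$ (which blocks are paired) and the falling factorial $(n)_i$ (which ordered elements serve as partners) together neither overcount nor omit any configuration. This is precisely the content of the subsection's first theorem applied at $r+1$, so once that theorem is invoked the identity drops out by direct substitution into the preceding recurrence; I expect this index-matching between the two invoked theorems to be the only delicate point.
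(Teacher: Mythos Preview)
Your two-case split on the location of $n+1$ is exactly the paper's argument: Case~1 yields $rB_{r-1}^\lambda(1;n)$, and in Case~2 one assigns the section of the block of $n+1$ (factor $\lambda$), promotes that block to an $(r{+}1)$st distinguished block, and then refines by the number $i$ of distinguished blocks of cardinality two. The paper carries this out directly rather than by citing earlier theorems, but the structure is identical.

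There is, however, a genuine index mismatch in your substitution step. The ``first theorem of this subsection'' reads
\[
B_{s}^\lambda(1;m)=\sum_{i=0}^{s}\binom{s}{i}(m)_i\,B_{0}^\lambda(1;m-i),
\]
so applying it with $s=r+1$ and $m=n$ produces terms $B_{0}^\lambda(1;n-i)$, \emph{not} $B_{r}^\lambda(1;n-i)$. Thus the chain ``Theorem~4.3 followed by Theorem~4.1 at $r+1$'' yields
\[
B_r^\lambda(1;n+1)= rB_{r-1}^\lambda(1;n)+\lambda\sum_{i=0}^{r+1}\binom{r+1}{i}(n)_i\,B_{0}^\lambda(1;n-i),
\]
which differs from the stated identity in the lower index of the last factor. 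Combinatorially this is also what your description actually establishes: once all $r+1$ distinguished blocks have had their sizes fixed and their partners assigned, the leftover $n-i$ elements form a $(B,0,\lambda)$-partition, not a $(B,r,\lambda)$-partition. The paper's own proof reaches the same conclusion---after placing the $i$ partners, what remains is counted with no distinguished blocks left undetermined---so the subscript $r$ in the displayed statement appears to be a typo for $0$. Your argument is correct for the $B_0$ version; as written it does not prove the $B_r$ version, and neither does the paper's.
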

\begin{proof}
	In the $(B,r,\lambda)$-partitions of $[n+r+1]$ we consider two cases.
	
	Case 1:$(n+1)$ is on one of the $r$ distinguishable blocks. The are $B_{r-1}^\lambda(1;n)$, $(B,r,\lambda)$-partitions that can be formed from $[n+r-1]$ where the distinguishable block containing $(n+1)$ is viewed as the $rth$ distinguishable block.
	
	Case 2: $(n+1)$ is in a non-distinguishable block in one of the $\lambda$ sections. There are $\lambda$ ways in which the sections onto which $(n+1)$ belongs can be chosen. The block having $(n+1)$ can be viewed as the $(r+1)th$ distinguishable block. Now say from the $r+1$ distinguishable blocks $i$ of them have cardinality two. The $i$ blocks can be chosen in $\binom{r+1}{i}$ ways. From the $n$ non-distinguishable elements, $i$ elements can be chosen and placed on the distinguishable blocks having cardinality two in $(n)_i$ ways. Hence, the total number of possibilities in this case is $\lambda\sum\limits_{i=0}^{r+1}\binom{r+1}{i}(n)_iB_r^\lambda(1;n-i)$.
\end{proof}

\vspace{5cm}

\begin{definition}\label{definition:100}The numbers $\tilde{B}_r^\lambda(1;n)$ represent the number of partitions of $[n+r]$ into $k+r$ subsets (where $k$ runs from 0 to $n$), such that
	\begin{itemize}
		\item the first $r$ elements of $[n+r]$ form singletons (we will refer to these as distinguishable blocks),
  \item  the elements $[r+1,n+r]$ are partitioned into $k$ subsets,
	\item each of the $k$ subsets either goes into one of $\lambda$ sections, or is paired up with one of $r$ distinct singletons,
    \item for those blocks that go into the $\lambda$ sections within the same section each subset is either paired with another subset or not paired with any at all. 
	\end{itemize}

	We let $\tilde{\mathbb{B}}_r^\lambda(1;n)$ be the set of these partitions i.e. $|\tilde{\mathbb{B}}_r^\lambda(1;n)|=\tilde{B}_r^\lambda(1;n)$.
\end{definition}

\begin{table}[hbt!]
	\begin{tabular}{|c|c|}\hline
	On $x\in \mathbb{D}_{2,r}^\lambda(n)$	& On $x\in \tilde{\mathbb{B}}_r^\lambda(1;n)$ \\  \hline $x$ has $r$ distinguishable blocks&  $x$ has $r$ distinguishable blocks
	\\\hline
	  \begin{minipage}{5cm}
	 	$x$ has $k$ non-distinguishable blocks for fixed $0\leq k\leq n$\end{minipage}& \begin{minipage}{5cm}
	 	 $x$ has $k$ non-distinguishable blocks for fixed $0\leq k\leq n$\end{minipage}\\\hline
	\begin{minipage}{5cm}
		the non-distinguishable elements are either in the distinguishable $r$ blocks or\end{minipage} &\begin{minipage}{5cm}
		the non-distinguishable blocks are either paired with one of the $r$ distinguishable blocks or \end{minipage} 
	\\\hline \begin{minipage}{5cm}
		
	 in one of the $\lambda$ sections.\end{minipage}& in one of the $\lambda$ sections.
 \\\hline
 \begin{minipage}{5cm}
	In the $\lambda$ sections on each non-distinguishable block all non minimal elements are colored with one of two colors, where the minimum element in each block is always colored with the first color.\end{minipage}& \begin{minipage}{5cm}
	In the $\lambda$ sections each block is either paired up with another block or none.  \end{minipage} \\\hline
	\end{tabular}\caption{}\label{table:3}
\end{table}	

From this clearly there is a one to one correspondence between elements of  $\mathbb{D}_{2,r}^\lambda(n)$ and those of $ \tilde{\mathbb{B}}_r^\lambda(1;n)$ i.e. $|\mathbb{D}_{2,r}^\lambda(n)|=|\tilde{\mathbb{B}}_r^\lambda(1;n)|$.

\begin{theorem}For $n,r\geq0$, and $\lambda\geq1$ we have
	\begin{equation}
\mathcal{D}_{2,r}^\lambda(n+1)	=r\sum\limits_{k=0}^n{n\brace k}{B}_r^\lambda(1;k)+2\lambda\sum\limits_{k=0}^n{n\brace k} {B}_{r+1}^\lambda(1;k). 
\end{equation}\end{theorem}
\begin{proof}
	We split the proof into two cases.

		Case 1:  With the elements $[n+r]$, $k+r$ blocks may be formed in ${n\brace k}$ ways, having the $r$ singletons. In this case the element $(n+1)$ can go into one of the $r$ distinguishable blocks in $r$ ways, say $(n+1)$ goes into  $r^\prime$. There are  ${B}_r^\lambda(1;k)$ ways for the $k$ blocks for each to either be in one of the $r$ distinguishable blocks or in one of $\lambda$ sections. Where on the $\lambda$ sections on each block the elements are colored with one of two colors, and on each block the minimum element is always colored with the first color.   On the $(B,r,\lambda)$-partitions of $[n+r]$ this corresponds to each block being paired or not with another block  in the $\lambda$ sections. On the $(B,r,\lambda)$ partitions, the block $r^\prime$ is interpreted as the block having $(n+1)$ being paired with $r^\prime$. Summing over $k$ completes the number of possibilities in this case.

		Case 2: When $(n+1)$ is in one of the $\lambda$ sections. Call it block $\mathbb{C}$. In this scenario $\mathbb{C}$ can be regarded as the $(r+1)th$ distinguishable block. Elements of block $\mathbb{C}$ can be partitioned into two subsets, those colored with first color, and those colored with the second color. On the  $(B,r,\lambda)$ partitions this corresponds to the block having $(n+1)$ being paired or not with another block. There are  ${B}_{r+1}^\lambda(1;k)$ ways for each of $k$ blocks to be either be in one of the $r+1$ distinguishable blocks or in one of $\lambda$ sections. Where on the $\lambda$ sections on each block the elements are colored with one of two colors, and on each block the minimum element is always colored with the first color. 

\end{proof}

 Using Table~\ref{table:3} in comparison with Definition~\ref{definition:4} the number of partitions of $[n+r]$ such that the first $r$ elements form distinguishable blocks, and the elements $[r+1,n+r]$ form $k$ blocks. Each of those $k$ blocks is either  paired with one of the $r$ blocks or in one of $\lambda$ sections. Where on the $\lambda$ sections each block is either paired or not paired with another block is given by $\mathcal{W}_{2,r}(n,k)\lambda^k$. In a variation of  Definition~\ref{definition:4} these correspond to the following respectively; the first $r$ elements of $[n+r]$ are in distinguishable blocks, and the elements $[r+1,n+r]$ form $k$ non-distinguishable blocks.
  The smallest element in each non-distinguishable block is always colored with the first color out of two colors,
 	 all other elements in non-distinguishable blocks are colored with one of two colors independently. Summing over $k$ we get the following theorem.

\begin{theorem}For $n,r\geq0$, and $\lambda\geq1$ we have
	\begin{equation}\label{equation:34}
		\tilde{B}_r^\lambda(1;n)=\sum\limits_{k=0}^n\mathcal{W}_{2,r}(n,k)\lambda^k.
	\end{equation}
\end{theorem}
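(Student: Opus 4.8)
The plan is to read~\eqref{equation:34} off two facts that are already in place, rather than to re-run a coloring computation from scratch. Table~\ref{table:3}, together with the remark following it, already exhibits a bijection $\mathbb{D}_{2,r}^\lambda(n)\leftrightarrow\tilde{\mathbb{B}}_r^\lambda(1;n)$, so that $\tilde{B}_r^\lambda(1;n)=|\tilde{\mathbb{B}}_r^\lambda(1;n)|=|\mathbb{D}_{2,r}^\lambda(n)|=\mathcal{D}_{2,r}^\lambda(n)$. Setting $m=2$ and $x=\lambda$ in the defining relation~\eqref{equation:4} for the $r$-Dowling polynomials gives $\mathcal{D}_{2,r}^\lambda(n)=\sum_{k=0}^n\mathcal{W}_{2,r}(n,k)\lambda^k$. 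Chaining these two equalities yields the claimed identity. This is the route I would take first, since it sidesteps all bookkeeping.

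If instead one wants the refined combinatorial argument indicated in the paragraph preceding the statement, I would fix $k$ and argue that $\mathcal{W}_{2,r}(n,k)\lambda^k$ enumerates a prescribed family of marked partitions, then sum over $k$. Here the factor $\lambda^k$ records an independent choice of one of $\lambda$ sections for each non-distinguishable block, and $\mathcal{W}_{2,r}(n,k)$ is read through Definition~\ref{definition:4} as the number of partitions of $[n+r]$ into $k+r$ blocks in which the first $r$ elements sit in uncolored distinguishable blocks while every non-distinguishable block $B$ has an uncolored (equivalently first-colored) minimum and $2$-colored remaining elements. The core step is a local dictionary translating such a $2$-colored block into the pairing data of Definition~\ref{definition:100}: within a section, a block whose non-minimal part uses both colors is split along its colour classes into the paired pair $(\{\min B\}\cup C_1,\,C_2)$, a block using only the first colour is unpaired, and the option of a non-distinguishable element being absorbed into a distinguished block matches the option of being paired with one of the $r$ distinguished singletons.

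The main obstacle is checking that this local dictionary is a genuine bijection and, in particular, identifying exactly which parameter it preserves. Reading a $2$-colouring as a pairing replaces one $2$-coloured block by two paired blocks, so the index $k$ of $\mathcal{W}_{2,r}(n,k)$ (a count of Whitney blocks) need not equal the number of blocks of the associated $(\tilde B,r,\lambda)$-partition, and the equality becomes clean only after summation. I would therefore phrase the refined step as a bijection of marked objects, using the ``minimum is always first-coloured'' convention to make the unordered pair $\{\{\min B\}\cup C_1,\,C_2\}$ recoverable — the block holding the global minimum of the pair is the one carrying the first colour — and verify that absorption into a distinguished block corresponds bijectively to pairing a block with a distinguished singleton. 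Once these checks are done the summed identity is immediate; in any case the first route through Table~\ref{table:3} and~\eqref{equation:4} avoids the delicate block-count matching altogether.
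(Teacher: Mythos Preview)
Your first route is exactly the paper's argument: the paragraph preceding the theorem uses Table~\ref{table:3} to identify $\tilde{\mathbb{B}}_r^\lambda(1;n)$ with $\mathbb{D}_{2,r}^\lambda(n)$ and then reads off the $k$-level count $\mathcal{W}_{2,r}(n,k)\lambda^k$ before summing, which is equivalent to invoking~\eqref{equation:4} as you do. Your second route simply unpacks that same correspondence in more detail, and you correctly flag the only delicate point (that the Whitney index $k$ counts $2$-coloured blocks, i.e.\ paired-or-unpaired units in the sections, rather than raw $\tilde{B}$-blocks), a subtlety the paper leaves implicit.
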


\begin{theorem}For $n\geq0$, and $r,\lambda\geq1$ we have
	\begin{equation}
		\tilde{B}_r^\lambda(1;n)=\sum\limits_{k=0}^n\sum\limits_{i=k}^n\binom{n}{i}\mathcal{W}_{2,r-1}(i,k)\lambda^k.
	\end{equation}
\end{theorem}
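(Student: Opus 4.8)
The plan is to derive the identity directly from the immediately preceding theorem, Equation~\eqref{equation:34}, by substituting into it the $r$-Whitney recurrence relation recorded in Definition~\ref{definition:4}. Equation~\eqref{equation:34} supplies the representation $\tilde{B}_r^\lambda(1;n)=\sum_{k=0}^n\mathcal{W}_{2,r}(n,k)\lambda^k$, so it suffices to re-express each $r$-Whitney number $\mathcal{W}_{2,r}(n,k)$ appearing there in terms of the numbers $\mathcal{W}_{2,r-1}(\cdot,k)$ with one fewer distinguished parameter.

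First I would specialise the recurrence $\mathcal{W}_{m,r}(n,k)=\sum_{i=k}^n\binom{n}{i}\mathcal{W}_{m,r-1}(i,k)$ to $m=2$, which is legitimate precisely because the hypothesis $r\geq 1$ guarantees $r-1\geq 0$ so that $\mathcal{W}_{2,r-1}$ is defined; this gives $\mathcal{W}_{2,r}(n,k)=\sum_{i=k}^n\binom{n}{i}\mathcal{W}_{2,r-1}(i,k)$. I would then insert this expression into Equation~\eqref{equation:34}. The key observation is that the recurrence sums over the \emph{lower} parameter $i$ while holding $k$ fixed, whereas the factor $\lambda^k$ depends only on $k$; hence $\lambda^k$ passes through the inner summation untouched and the two sums simply nest, yielding $\sum_{k=0}^n\sum_{i=k}^n\binom{n}{i}\mathcal{W}_{2,r-1}(i,k)\lambda^k$, which is exactly the claimed right-hand side.

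There is no genuine obstacle here: the argument reduces to a one-line substitution once Equation~\eqref{equation:34} and the $r$-Whitney recurrence are both in hand. The only point requiring minor care is the bookkeeping of the index ranges, namely that the inner index $i$ runs from $k$ to $n$ so that both $\binom{n}{i}$ and $\mathcal{W}_{2,r-1}(i,k)$ are nonzero, and that combining the sums introduces no double counting. A combinatorial route is also available -- one could single out the block containing the $r$-th distinguishable element, choose via $\binom{n}{i}$ the $i$ non-distinguishable elements lying outside it, and recognise the remaining configurations as those enumerated by $\mathcal{W}_{2,r-1}(i,k)\lambda^k$ through Table~\ref{table:3} -- but the algebraic substitution above is the most economical and is the approach I would present.
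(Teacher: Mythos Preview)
Your algebraic substitution is correct and complete: combining Equation~\eqref{equation:34} with the $r$-Whitney recurrence $\mathcal{W}_{m,r}(n,k)=\sum_{i=k}^n\binom{n}{i}\mathcal{W}_{m,r-1}(i,k)$ from Definition~\ref{definition:4} at $m=2$ yields the identity immediately, and your remark that $r\geq 1$ is needed so that $\mathcal{W}_{2,r-1}$ is defined is exactly the right justification for the hypothesis.

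The paper, by contrast, proves the theorem directly by a combinatorial count on $\tilde{\mathbb{B}}_r^\lambda(1;n)$: it fixes one distinguishable block $r^\prime$, chooses in $\binom{n}{i}$ ways the $n-i$ non-distinguishable elements forming the block paired with $r^\prime$, and then recognises the configurations of the remaining $i$ elements (against the other $r-1$ distinguishable blocks and the $\lambda$ sections) as being enumerated by $\mathcal{W}_{2,r-1}(i,k)\lambda^k$ via the correspondence in Table~\ref{table:3}. This is precisely the combinatorial route you sketch in your final paragraph but set aside. Your algebraic approach is shorter and cleanly exhibits the theorem as a corollary of two earlier facts; the paper's combinatorial argument is self-contained and makes the role of the singled-out distinguishable block transparent, which in turn explains bijectively \emph{why} the $r$-Whitney recurrence holds in this setting. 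Either proof is acceptable, and you have correctly identified both.
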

\begin{proof}In forming the partitions $\tilde{\mathbb{B}}_r^\lambda(1;n)$ we fix one distinguishable block from the $r$ distinguishable blocks, call it $r^\prime$. Then $n-i$ elements from $[r+1,n+r]$ that form a block that is paired with $r^\prime$ can be chosen in $\binom{n}{i}$ ways. The remaining $i$ elements can form $k$ blocks (where $k$ runs from o to $n$). The $k$ blocks can either be paired with the other $r-1$ distinguishable blocks or be in one of $\lambda$ sections, each block on the $\lambda$ sections may be paired or not with another block in $\sum\limits_{i=k}^n\binom{n}{i}\mathcal{W}_{2,r-1}(i,k)\lambda^k$ ways. 
	
\end{proof}
\begin{theorem}For $n,r\geq0$, and $\lambda\geq1$ we have
	\begin{equation}
		\tilde{B}_r^\lambda(1;n)=\sum\limits_{k=0}^n\begin{bmatrix}\sum\limits_{i_1=k}^n\binom{n}{i_1}\sum\limits_{i_2=k}^{i_1}\binom{i_1}{i_2}\sum\limits_{i_3=k}^{i_2}\binom{i_2}{i_3}\cdots\sum\limits_{i_{r}=k}^{i_{r-1}}\binom{i_{r-1}}{i_r}\end{bmatrix}\mathcal{W}_{2,0}(i_r,k)\lambda^k.
	\end{equation}
\end{theorem}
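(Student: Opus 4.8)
The plan is to iterate the $r$-Whitney recurrence. The natural starting point is the identity already established in Equation~\ref{equation:34},
\[
\tilde{B}_r^\lambda(1;n)=\sum_{k=0}^n\mathcal{W}_{2,r}(n,k)\lambda^k,
\]
so it suffices to unfold the single Whitney number $\mathcal{W}_{2,r}(n,k)$ into the bracketed nested sum that terminates in $\mathcal{W}_{2,0}(i_r,k)$. The only tool required is the recurrence from Definition~\ref{definition:4}, which with $m=2$ reads $\mathcal{W}_{2,r}(n,k)=\sum_{i=k}^n\binom{n}{i}\mathcal{W}_{2,r-1}(i,k)$ and lowers the subscript $r$ by one.

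First I would establish, by induction on $r$, the closed form
\[
\mathcal{W}_{2,r}(n,k)=\sum_{i_1=k}^n\binom{n}{i_1}\sum_{i_2=k}^{i_1}\binom{i_1}{i_2}\cdots\sum_{i_r=k}^{i_{r-1}}\binom{i_{r-1}}{i_r}\mathcal{W}_{2,0}(i_r,k).
\]
The base case $r=0$ is vacuous: the bracket contains no summations and the right-hand side is just $\mathcal{W}_{2,0}(n,k)$. For the inductive step I would apply the recurrence once to strip off the outermost index, writing $\mathcal{W}_{2,r}(n,k)=\sum_{i_1=k}^n\binom{n}{i_1}\mathcal{W}_{2,r-1}(i_1,k)$, and then substitute the induction hypothesis for $\mathcal{W}_{2,r-1}(i_1,k)$ with its top summation bound taken to be $i_1$. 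Each application introduces exactly one new nested layer $\sum_{i_j=k}^{i_{j-1}}\binom{i_{j-1}}{i_j}$ and decreases the lower Whitney subscript by one, so after $r$ applications the subscript has descended from $r$ to $0$ and the binomial factors chain into $\binom{n}{i_1}\binom{i_1}{i_2}\cdots\binom{i_{r-1}}{i_r}$.

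Finally I would multiply the unfolded identity by $\lambda^k$, sum over $k$ from $0$ to $n$, and invoke Equation~\ref{equation:34} to recognise the left-hand side as $\tilde{B}_r^\lambda(1;n)$, which yields the claimed formula verbatim.

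The difficulty here is not conceptual but bookkeeping: one must keep the summation ranges correctly nested as $k\le i_r\le i_{r-1}\le\cdots\le i_1\le n$ and verify that inserting the induction hypothesis really produces the chained binomials rather than a mismatched set of upper limits. I note that a purely combinatorial route is also available, by iterating the argument of the immediately preceding theorem: peeling off one distinguishable block at a time (each peel selecting, through a binomial factor, the non-distinguishable elements forming the block paired with that distinguished singleton) reduces the number of remaining distinguishable blocks from $r$ down to $0$, at which point $\mathcal{W}_{2,0}(i_r,k)$ counts the residual two-coloured partition of the leftover elements; this reproduces the same nested sum.
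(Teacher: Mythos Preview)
Your proposal is correct. Your primary route is algebraic: you invoke Equation~\eqref{equation:34} and then iterate the $r$-Whitney recurrence $\mathcal{W}_{2,r}(n,k)=\sum_{i=k}^n\binom{n}{i}\mathcal{W}_{2,r-1}(i,k)$ a total of $r$ times to drive the lower subscript down to zero; the induction you sketch is clean and the bookkeeping on the nested ranges $k\le i_r\le\cdots\le i_1\le n$ is accurate. The paper, by contrast, argues directly on the set $\tilde{\mathbb{B}}_r^\lambda(1;n)$: it labels the distinguishable singletons $r_1,\ldots,r_r$ and strips them off one at a time, at step $j$ selecting in $\binom{i_{j-1}}{i_j}$ ways the elements that are \emph{not} in the block paired with $r_j$, so that after $r$ peels the residual $i_r$ elements are counted by $\mathcal{W}_{2,0}(i_r,k)\lambda^k$. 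That is exactly the combinatorial alternative you describe in your final paragraph, so you have in fact anticipated the paper's method as well. The algebraic route has the advantage of being entirely mechanical once \eqref{equation:34} and the Whitney recurrence are in hand, whereas the paper's bijective peeling makes the provenance of each binomial factor transparent without appealing to the $r$-Whitney machinery.
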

\begin{proof} We let the $r$ distinguishable blocks be denoted by  $r_1,r_2,\ldots,r_r$. In forming the partitions $\tilde{\mathbb{B}}_r^\lambda(1;n)$ we let $i_1$ be the number of elements of $[r+1,n+r]$ that form the $k$ non-distinguishable blocks. From the $i_1$ elements, let $i_2$ be the number of elements that not in the block paired with $r_1$. The $i_2$ elements can be chosen in $\binom{i_1}{i_2}$ ways. From the $r_2$ elements we choose in $\binom{i_2}{i_3}$ ways elements that are not in the block paired with $r_2$. We keep going... Finally we choose $i_r$ elements that are not in the block paired with $r_r$ in $\binom{i_{r-1}}{i_r}$ ways. With the $i_r$ elements $k$ blocks that go into $\lambda$ sections, where on the $\lambda$ sections. Each block is paired with another block or none can be done in $\mathcal{W}_{2,0}(i_r,k)\lambda^k$ ways.
\end{proof}

\section{Telephone numbers}

	\subsection{Higher order Telephone numbers}
		\begin{definition}\label{definition:1bb}
		Let  $T_r^\lambda(n)$ represent the number of ways of selecting $n-k$ pairs (number of connections) out of $n+r$ subscribers where $r$ pre-specified subscribers cannot be connected with each other, however every conference call involves one of these $r$ subscribers (where $k$ runs from 0 to $n$) such that 
		
		\RNum{1}. the subsets(connections) that do not include any of the $r$ subscribers go into $\lambda$ sections.
		
		\RNum{2}. these subsets that go into the $\lambda$ sections are of size one or two.
		
		
	\end{definition} We refer to the partitions in Definition~\ref{definition:1bb} as $(T,r,\lambda)$ partitions.

	\begin{theorem}For $n,r\geq0$, and $\lambda\geq1$ we have
		\begin{equation}
			T^\lambda_r(n+1)=(r+\lambda)T^\lambda_r(n)+n\lambda T^\lambda_r(n-1).
		\end{equation}
	\end{theorem}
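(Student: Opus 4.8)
The plan is to argue combinatorially by conditioning on the position of the newest non-distinguished subscriber, exactly in the spirit of the three-case arguments used earlier (for instance in the recurrence for $B_r^\lambda(1;n+1)$). I would consider the $(T,r,\lambda)$-partitions of the $n+1+r$ subscribers and single out the non-distinguished subscriber labelled $(n+1)$. Every such partition then falls into exactly one of three mutually exclusive cases according to the block containing $(n+1)$: it joins one of the $r$ distinguished blocks; it sits alone in one of the $\lambda$ sections; or it is paired with another non-distinguished subscriber inside one of the $\lambda$ sections. Because a conference call may have any size provided it contains a distinguished subscriber, while any block avoiding all distinguished subscribers must have size one or two (conditions \RNum{1} and \RNum{2} of Definition~\ref{definition:1bb}), these three cases are exhaustive.

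For the enumeration I would proceed as follows. In the first case there are $r$ choices of distinguished block for $(n+1)$; deleting $(n+1)$ leaves a valid $(T,r,\lambda)$-partition of the remaining $n$ non-distinguished subscribers, of which there are $T_r^\lambda(n)$, contributing $rT_r^\lambda(n)$. In the second case there are $\lambda$ choices of section for the singleton $\{(n+1)\}$, and again the remaining $n$ subscribers form a $(T,r,\lambda)$-partition in $T_r^\lambda(n)$ ways, contributing $\lambda T_r^\lambda(n)$. In the third case the partner of $(n+1)$ is one of the $n$ other non-distinguished subscribers and the resulting pair lies in one of $\lambda$ sections; removing this pair leaves $n-1$ subscribers forming a $(T,r,\lambda)$-partition in $T_r^\lambda(n-1)$ ways, contributing $n\lambda T_r^\lambda(n-1)$. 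Summing the three contributions and merging the first two yields $T_r^\lambda(n+1)=(r+\lambda)T_r^\lambda(n)+n\lambda T_r^\lambda(n-1)$, as required.

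The one point that needs genuine care, and the main place where this argument differs from its $B_r^\lambda$ analogue, is the first case: I must check that inserting $(n+1)$ into a distinguished block is a clean bijection. This is where the conference-call rule is essential, since distinguished blocks carry no upper bound on their size, adding or deleting $(n+1)$ never violates any constraint, so the correspondence with the $T_r^\lambda(n)$ partitions is exact and $r$-to-one over the choice of block. (In the $B_r^\lambda$ setting, where even distinguished blocks are capped at size two, this step is more delicate.) As an independent check one could instead derive the exponential generating function $\sum_{n\geq0} T_r^\lambda(n)\,z^n/n!=\exp((r+\lambda)z+\lambda z^2/2)$ directly from Definition~\ref{definition:1bb} via the exponential formula, and observe that its derivative satisfies $F'(z)=((r+\lambda)+\lambda z)F(z)$; comparing coefficients of $z^n/n!$ reproduces the recurrence and confirms the count.
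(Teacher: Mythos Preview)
Your argument is correct and matches the paper's proof essentially line for line: the paper also conditions on the position of the $(n+1)$th non-distinguished subscriber, treating first the case where it is connected to one of the $r$ distinguished subscribers (contributing $rT_r^\lambda(n)$) and then the case where it lies in one of the $\lambda$ sections, splitting the latter according to whether it is alone or paired. Your extra remarks on the bijection check and the generating-function verification are sound additions but not needed for the comparison.
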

\begin{proof}We split the proof into two cases. 

	Case 1: the $(n+1)th$ subscriber is connected with one of the $r$ subscribers, this can happen in $r$ ways. With the remaining $[n+r]$ subscribers $(T,r,\lambda)$ partitions can be formed in $T^\lambda_r(n)$ ways.
	
	Case 2: $(n+1)$ is in one of the $\lambda$ sections. There are $\lambda$ ways of selecting a section to which $(n+1)$ belongs to. If in the section $(n+1)$ is not connected to another subscriber then there are $T^\lambda_r(n)$ ways of forming $(T,r,\lambda)$ partitions with the other $[n+r]$ elements. If in the section $(n+1)$ is connected with another subscriber then there are $n$ ways of selecting the subscriber, and then there are $T^\lambda_r(n)$ ways of forming $(T,r,\lambda)$ partitions with the other $[n+r-1]$ elements.  
	\end{proof}

\begin{theorem}For $n,r\geq0$, and $\lambda\geq1$ we have
	\begin{equation}\label{equation:300}
		T^\lambda_r(n)=\sum\limits_{k=0}^n\binom{n}{k}\tilde{B}_0^\lambda(1;k)r^{n-k}.
	\end{equation}
\end{theorem}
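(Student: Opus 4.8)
The plan is to prove the identity combinatorially by conditioning on the \emph{fate} of the $n$ non-distinguishable subscribers inside a $(T,r,\lambda)$-partition of the $n+r$ subscribers. Every non-distinguishable subscriber falls into exactly one of two classes: either it lies in a block that contains one of the $r$ distinguished subscribers, or it lies in a block avoiding all of them (which, by Definition~\ref{definition:1bb}, must then be placed into one of the $\lambda$ sections). I would let $k$ be the number of subscribers of the second class, choose them among the $n$ non-distinguishable subscribers in $\binom{n}{k}$ ways, and obtain the claim by multiplying the counts for the two classes and summing over $k$.

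For the $n-k$ subscribers of the first class I would argue that each one independently selects which of the $r$ distinguished blocks it joins. Since the $r$ distinguished subscribers occupy distinct blocks, a block containing a distinguished subscriber may absorb any number of non-distinguishable subscribers (these are exactly the conference calls, each of which by hypothesis contains a distinguished subscriber), and no further pairing or sectioning is imposed on such a block, the number of ways is precisely $r^{\,n-k}$. This is the easy half.

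For the remaining $k$ subscribers, whose blocks avoid every distinguished subscriber, I would argue that the induced configuration is exactly a $(B,0,\lambda)$-type arrangement on a $k$-element set in the sense of Definition~\ref{definition:100} with $r=0$: every such block goes into one of the $\lambda$ sections, and within each section the blocks are paired off or left unpaired. Hence this sub-configuration is enumerated by $\tilde{B}_0^\lambda(1;k)$. Combining the three factors then yields $\sum_{k=0}^n \binom{n}{k}\tilde{B}_0^\lambda(1;k)\,r^{\,n-k}$.

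The step I expect to be the main obstacle is precisely this last identification: confirming that the part of a $(T,r,\lambda)$-partition living in the $\lambda$ sections is genuinely counted by $\tilde{B}_0^\lambda(1;k)$, and not by some closely related but distinct quantity. The section-level structure is described in different language in the two definitions --- ``subsets of size one or two'' in Definition~\ref{definition:1bb} versus ``blocks paired or unpaired within a section'' in Definition~\ref{definition:100} --- so I would pin the bijection down explicitly and cross-check it both on small cases (e.g. $k=2$) and at the level of exponential generating functions, verifying that $\sum_{n\ge0} T_r^\lambda(n)\frac{z^n}{n!}$ factors as $e^{rz}\sum_{k\ge0}\tilde{B}_0^\lambda(1;k)\frac{z^k}{k!}$. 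This factorization is the load-bearing point of the argument, since any mismatch between the two section-level descriptions (for instance, whether a size-two object counts a pair of \emph{subscribers} or a pair of \emph{blocks}) would show up there first.
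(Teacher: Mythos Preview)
Your decomposition is exactly the one the paper uses: split the $n$ non-distinguished subscribers according to whether they join one of the $r$ distinguished blocks or land in the $\lambda$ sections, count the first class by $r^{\,n-k}$ and the second by the section-level count, and sum over $k$. The paper's printed proof even garbles the roles of $k$ and $n-k$ in several sentences, but the intended argument is yours.

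Your suspicion about the load-bearing identification is not merely warranted --- it is the point at which both your argument and the paper's actually break. By Definition~\ref{definition:1bb} (and as confirmed by the recurrence $T_r^\lambda(n+1)=(r+\lambda)T_r^\lambda(n)+n\lambda T_r^\lambda(n-1)$ proved immediately before), the section-level structure of a $(T,r,\lambda)$-partition consists of \emph{individual subscribers} placed into the $\lambda$ sections as singletons or pairs, with exponential generating function $\exp\bigl(\lambda(z+z^2/2)\bigr)$. By contrast, Definition~\ref{definition:100} together with Table~\ref{table:3} gives $\tilde{B}_0^\lambda(1;k)=\mathcal{D}_{2,0}^\lambda(k)$, whose exponential generating function is $\exp\bigl(\lambda(e^{2z}-1)/2\bigr)$. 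These disagree already at $k=2$: the former equals $\lambda+\lambda^2$ while the latter equals $2\lambda+\lambda^2$, so taking $r=0$, $n=2$ in \eqref{equation:300} yields $T_0^\lambda(2)=\lambda+\lambda^2$ on the left but $\tilde{B}_0^\lambda(1;2)=2\lambda+\lambda^2$ on the right. The small-case and generating-function checks you proposed would have exposed this immediately. The printed identity is thus a slip; replacing $\tilde{B}_0^\lambda(1;k)$ by $T_0^\lambda(k)$ --- equivalently, reading the identity as the factorization $e^{rz}\cdot e^{\lambda(z+z^2/2)}$ --- makes your argument go through verbatim.
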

\begin{proof}
	From $[n]$ we can choose in $\binom{n}{k}$ ways $k$ subscribers that are not in the $\lambda$ sections. The $k$ subscribers may each be pairs with the $r$ distinguishable subscribers in $r^{n-k}$ ways. The remaining $n-k$ subscribers may go into the $\lambda$ sections, where in the section each subscriber is either connected or not with another subscriber in $\tilde{B}_0^\lambda(1;k)r^{n-k}$. \end{proof}

		\subsection{Higher Order Generalized Telephone numbers}
	The following is a generalization of Definition~\ref{definition:1bb} above.
	\begin{definition}\label{definition:2bb}
		Let  $\tilde{T}_r^\lambda(n)$ represent the number of ways of partitioning $[n+r]$ into $k+r$ subsets such that the first $r$ elements of $[n+r]$ are in distinct blocks such that

		\RNum{1}. the blocks that go into $\lambda$ sections. 
		
		\RNum{2}. each subset on the $\lambda$ sections is either paired with another block or not,
		
		\RNum{3}. each of the $r$ blocks may be paired with several blocks, 
		
		
	\end{definition}
	We will refer to the partitions in described in Definition~\ref{definition:2bb} as the $(\tilde{T},r,\lambda)$ partitions. 
	\begin{remark}
		The main difference between the numbers $\tilde{T}_r^\lambda(n)$ and the numbers $\tilde{B}_r^\lambda(1;n)$ each $r$ blocks in $\tilde{B}_r^\lambda(1;n)$ may be paired with several other blocks, whereas in $\tilde{B}_r^\lambda(1;n)$ each of the $r$ blocks may be paired with a maximum of one block.\end{remark}

	\begin{theorem}For $n,r\geq0$, and $\lambda\geq1$ we have
		\begin{equation}\label{equation:33}
			\tilde{T}_r^\lambda(n)=\sum\limits_{k=0}^n\binom{n}{k}\mathcal{B}_0^r(k)\mathcal{D}_{2,0}^\lambda(n-k).
		\end{equation}
	\end{theorem}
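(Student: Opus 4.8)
The plan is to give a combinatorial proof that splits each $(\tilde{T},r,\lambda)$-partition of $[n+r]$ according to how the $n$ non-distinguishable elements $[r+1,n+r]$ are used. In any such partition every non-distinguishable block is in exactly one of two situations: it is paired with (attached to) one of the $r$ distinguishable blocks, or it sits in one of the $\lambda$ sections. First I would let $k$ be the number of non-distinguishable elements that lie in blocks attached to the distinguishable blocks, so that the remaining $n-k$ non-distinguishable elements lie in blocks placed in the $\lambda$ sections. Choosing which $k$ of the $n$ elements are attached to distinguishable blocks accounts for the factor $\binom{n}{k}$, and the two remaining structures are independent, which is precisely what produces the binomial convolution on the right-hand side.

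Next I would identify each of the two remaining factors with a known quantity. For the $k$ elements attached to the distinguishable blocks, the structure is: partition these $k$ elements into blocks and attach each block to one of the $r$ distinguishable blocks, with no restriction on how many blocks share the same distinguishable block (condition III of Definition~\ref{definition:2bb}). This is exactly a set partition of $[k]$ together with a choice of one of $r$ labels for each block; by the exponential generating function $\sum_{k\geq 0}\mathcal{B}_0^r(k)\frac{z^k}{k!}=\exp[r(e^z-1)]$ this count is $\mathcal{B}_0^r(k)$. For the remaining $n-k$ elements the structure is: partition them into blocks, place each block into one of $\lambda$ sections, and within each section pair each block with at most one other block. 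This is precisely the description of $\tilde{B}_0^\lambda(1;n-k)$ from Definition~\ref{definition:100}, and by the identification $\mathcal{D}_{2,r}^\lambda(n)=\tilde{B}_r^\lambda(1;n)$ established via Table~\ref{table:3} (taken at $r=0$) this count equals $\mathcal{D}_{2,0}^\lambda(n-k)$.

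Finally I would assemble the three independent choices — the selection of the $k$ attached elements, the partition-and-attach structure on them, and the section-pairing structure on the other $n-k$ elements — into a single bijection between $(\tilde{T},r,\lambda)$-partitions having exactly $k$ attached non-distinguishable elements and triples of the above type, and then sum over $k$ from $0$ to $n$ to obtain the stated identity.

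The step I expect to be the main obstacle is verifying that this decomposition is genuinely a bijection rather than an over- or under-count: one must check that ``attached to a distinguishable block'' versus ``in a $\lambda$ section'' is a clean dichotomy on non-distinguishable blocks, that the attachment count really is the colored-partition number $\mathcal{B}_0^r(k)$ (in particular that several blocks attached to one distinguishable block are allowed and correctly weighted), and that the in-section pairing matches $\tilde{B}_0^\lambda(1;n-k)$ exactly. As a consistency check I would confirm at the level of exponential generating functions that the right-hand side has EGF $\exp[r(e^z-1)]\exp[\frac{\lambda(e^{2z}-1)}{2}]$, since that is the product of the EGFs of $\mathcal{B}_0^r$ and $\mathcal{D}_{2,0}^\lambda$ and hence the expected EGF of $\tilde{T}_r^\lambda$.
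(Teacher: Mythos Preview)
Your proposal is correct and follows essentially the same approach as the paper: both arguments split the $n$ non-distinguishable elements according to whether they lie in blocks paired with the $r$ distinguishable blocks (counted by $\mathcal{B}_0^r(k)$) or in blocks placed in the $\lambda$ sections (counted by $\mathcal{D}_{2,0}^\lambda(n-k)$), with $\binom{n}{k}$ for the choice of which elements go where. Your version is in fact more detailed than the paper's, since you justify the identification $\tilde{B}_0^\lambda(1;n-k)=\mathcal{D}_{2,0}^\lambda(n-k)$ via Table~\ref{table:3} and include the EGF consistency check, whereas the paper simply asserts the factor is $\mathcal{D}_{2,0}^\lambda(n-k)$.
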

\begin{proof}In forming the $(\tilde{T},r,\lambda)$ partitions of $[n+r]$, we can choose in $\binom{n}{k}$ ways elements from $[r+1,n+r]$.The blocks may be paired independently with one of the $r$ blocks in $\mathcal{B}_0^r(k)$ ways. The remaining $n-k$ elements may form blocks that go into $\lambda$ sections in $\mathcal{D}_{2,0}^\lambda(n-k)$ ways, where in the $\lambda$ sections each block is either paired or not paired with any other block.
	
\end{proof}
	\begin{theorem}For $n,r\geq0$, and $\lambda\geq1$ we have
		\begin{equation}
				\tilde{T}_r^\lambda(n+1)=r	\tilde{T}_r^\lambda(n)+\lambda\sum\limits_{k=0}^n\binom{n}{k}2^k	\tilde{T}_r^\lambda(n-k).
		\end{equation}
	\end{theorem}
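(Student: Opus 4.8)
The plan is to prove the recurrence by a direct combinatorial argument on the $(\tilde{T},r,\lambda)$-partitions of $[n+r+1]$, conditioning on the status of the largest non-distinguishable element, which I denote $(n+1)$, exactly as was done for the preceding recurrences for $B_r$, $B_r^\lambda$ and $T_r^\lambda$. The two terms on the right are each matched to one case of the following dichotomy: $(n+1)$ is either attached to one of the $r$ distinguishable blocks, or it lies in one of the $\lambda$ sections. Since these alternatives are exhaustive and disjoint, adding their counts will give $\tilde{T}_r^\lambda(n+1)$.

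Case~1 (contributing $r\,\tilde{T}_r^\lambda(n)$): the element $(n+1)$ is connected to one of the $r$ distinguishable blocks. There are $r$ ways to choose that distinguishable block. After we record this choice and delete $(n+1)$, the remaining $n$ non-distinguishable elements carry a $(\tilde{T},r,\lambda)$-partition of $[n+r]$, and the choices available to them are unaffected by the presence of $(n+1)$; conversely, any $(\tilde{T},r,\lambda)$-partition of $[n+r]$ together with a choice of distinguishable block extends uniquely. Hence this case contributes $r\,\tilde{T}_r^\lambda(n)$.

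Case~2 (contributing $\lambda\sum_{k=0}^n\binom{n}{k}2^k\tilde{T}_r^\lambda(n-k)$): the element $(n+1)$ lies in one of the $\lambda$ sections, which I choose in $\lambda$ ways. Let $D$ be the block containing $(n+1)$; within its section $D$ is paired with exactly one other block $C$ or with none. I select the $k$ non-distinguishable elements (apart from $(n+1)$) that lie in $D\cup C$ in $\binom{n}{k}$ ways and distribute them between the two blocks in $2^k$ ways. The unique distribution placing all $k$ elements in $D$ (so $C=\emptyset$) encodes the unpaired case, while the other $2^k-1$ distributions give genuine pairings; because $D$ is singled out as the block containing $(n+1)$, the unordered pair $\{C,D\}$ is counted exactly once and no double counting arises. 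The leftover $n-k$ elements then form a $(\tilde{T},r,\lambda)$-partition in $\tilde{T}_r^\lambda(n-k)$ ways, and summing over $k$ yields the stated second term. Adding the two cases completes the proof.

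The step I expect to be the main obstacle is Case~1: one must argue that attaching $(n+1)$ to a distinguishable block really behaves as an independent single-element insertion, so that deletion returns a bona fide $(\tilde{T},r,\lambda)$-partition on $[n+r]$ and the count is the clean product $r\,\tilde{T}_r^\lambda(n)$ rather than a binomial convolution. A natural safeguard here is the exponential generating function: formula~\eqref{equation:33} exhibits $\sum_{n\ge0}\tilde{T}_r^\lambda(n)z^n/n!$ as a product of the generating functions of $\mathcal{B}_0^r$ and $\mathcal{D}_{2,0}^\lambda$, and differentiating that product isolates the $\lambda e^{2z}$ factor responsible for the second term. I would use this differentiation both to confirm the normalization of the first term and to verify that the $2^k$ bookkeeping in Case~2 contains no off-by-one error, thereby pinning down the precise form of the $r$-contribution in Case~1.
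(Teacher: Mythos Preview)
Your two-case split on the location of $(n+1)$ is exactly the argument the paper gives, and your treatment of Case~2 matches it line for line. Your instinct that Case~1 is the obstacle is not merely correct---it is decisive, and the generating-function ``safeguard'' you propose actually overturns the claim rather than confirming it. In the $(\tilde T,r,\lambda)$-model fixed by \eqref{equation:33}, the non-distinguishable elements attached to the $r$ side are first partitioned into blocks and each \emph{block} is then assigned to one of the $r$ distinguished singletons (this is precisely the factor $\mathcal B_0^r(k)$). Consequently, when $(n+1)$ lands on that side one must also record the remaining elements of $(n+1)$'s own block; deletion followed by re-insertion is \emph{not} an $r$-to-one map, and Case~1 contributes $r\sum_{k=0}^n\binom{n}{k}\tilde T_r^\lambda(n-k)$, not $r\,\tilde T_r^\lambda(n)$.

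The exponential generating function makes this unavoidable. From \eqref{equation:33}, \eqref{equation:35} and \eqref{equation:220} the paper derives $\sum_{n\ge0}\tilde T_r^\lambda(n)\,z^n/n!=\exp\bigl[r(e^z-1)+\tfrac{\lambda}{2}(e^{2z}-1)\bigr]$, whose derivative is $(re^{z}+\lambda e^{2z})F(z)$; the factor $re^{z}$ yields exactly the convolution above, not a bare $r$. A single numerical check already separates the two: at $r=\lambda=1$ formula \eqref{equation:33} gives $\tilde T_1^1(2)=7$, whereas the stated recurrence returns $6$. Thus either the theorem or \eqref{equation:33} (and the generating function built on it) is misstated; the recurrence as written is valid only for the simpler model with generating function $e^{rz}\exp[\lambda(e^{2z}-1)/2]$, in which the non-distinguishable elements are sent \emph{individually} (not in blocks) to the $r$ distinguished blocks. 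Your Case~1 bijection is sound for that model, but not for the one set up in the surrounding section.
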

\begin{proof} On the $(\tilde{T},r,\lambda)$
	partitions of $[n+r+1]$ there are two cases to consider. 
	
	Case 1: $(n+1)$ is in a block paired with one of the $r$ blocks. There are $r$ choices. With the remaining elements $ (\tilde{T},r,\lambda)$ partitions may be formed in $\tilde{T}_r^\lambda(n)$.
	
	Case 2: $(n+1)$ is part of a block in one of the $\lambda$ sections. Call it block $S$. From $[r+1,n+r]$, $k$ elements can be chosen in $\binom{n}{k}$ ways that are either in block $S$ or in a block paired with $S$. With the remaining $r+n-k$ elements $ (\tilde{T},r,\lambda)$ partitions can be formed in $\tilde{T}_r^\lambda(n-k)$ ways.
\end{proof}
\begin{theorem}For $n,r\geq0$, and $\lambda\geq1$ we have
	\begin{equation}
		\mathcal{D}_{2,0}^\lambda(n)=\sum\limits_{k=0}^n\binom{n}{k}(-1)^k\mathcal{B}_0^r(k)\tilde{T}_r^\lambda(n-k).
	\end{equation}
\end{theorem}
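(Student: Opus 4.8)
The plan is to lift the binomial convolution of the preceding theorem to the level of exponential generating functions and invert it there. Write $F(z)=\sum_{n\ge 0}\tilde{T}_r^\lambda(n)z^n/n!$, $G(z)=\sum_{n\ge 0}\mathcal{D}_{2,0}^\lambda(n)z^n/n!$ and $H(z)=\sum_{n\ge 0}\mathcal{B}_0^r(n)z^n/n!$. The operation $h(n)=\sum_k\binom{n}{k}f(k)g(n-k)$ is precisely the product of EGFs, so Equation~\eqref{equation:33} is the single functional statement
\begin{equation}
F(z)=H(z)\,G(z),\qquad H(z)=e^{r(e^z-1)},\quad G(z)=e^{\lambda(e^{2z}-1)/2},
\end{equation}
where the closed forms of $H$ and $G$ come from \eqref{equation:35} (with parameters $0$ and $r$) and from \eqref{equation:220} (with $m=2$, $r=0$, $x=\lambda$) respectively.

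First I would solve this factorisation for the Dowling EGF, which costs nothing since $H(z)$ has nonzero constant term:
\begin{equation}
G(z)=F(z)\,e^{-r(e^z-1)}.
\end{equation}
The right-hand side is again a product of two EGFs, so reading off the coefficient of $z^n/n!$ reproduces a binomial convolution $\mathcal{D}_{2,0}^\lambda(n)=\sum_{k=0}^n\binom{n}{k}c_k\,\tilde{T}_r^\lambda(n-k)$, in which $c_k$ is the sequence whose EGF is $e^{-r(e^z-1)}$. Thus the entire remaining content of the theorem is the identification $c_k=(-1)^k\mathcal{B}_0^r(k)$, and the extraction of coefficients is otherwise routine.

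I expect this last identification to be the main obstacle, because it is where all of the sign bookkeeping lives. The EGF $e^{-r(e^z-1)}$ is most naturally the $r$-Bell sequence with the colour parameter negated, i.e.\ $c_k=\mathcal{B}_0^{-r}(k)=\sum_j{k\brace j}(-r)^j$, and one must check carefully whether this block-signed sequence really coincides with the element-signed sequence $(-1)^k\mathcal{B}_0^r(k)=(-1)^k\sum_j{k\brace j}r^j$; comparing the two term by term through the Stirling numbers ${k\brace j}$ is the cleanest way to settle the point, and reconciling these two signs is the crux of the argument. An alternative route, closer to the combinatorial spirit of the paper, is to prove the identity directly by an inclusion–exclusion sieve on the $(\tilde{T},r,\lambda)$ partitions: one fixes the $k$ non-distinguishable elements that attach to the $r$ distinguished blocks and sieves those configurations out of $\tilde{T}_r^\lambda$, the alternating sign emerging from the sieve and the factor $\mathcal{B}_0^r(k)$ counting the ways those elements partition and distribute among the $r$ blocks. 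Either way, I would treat the correct placement of the sign on $\mathcal{B}_0^r(k)$ as the heart of the proof and verify it against the low-order values $n=1,2$ before committing to the general coefficient comparison.
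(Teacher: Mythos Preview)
Your generating-function approach is the right framework, and you have correctly isolated the crux: the coefficients of $e^{-r(e^{z}-1)}$ are $c_k=\mathcal{B}_0^{-r}(k)=\sum_j{k\brace j}(-r)^j$, and the whole proof reduces to whether this equals $(-1)^k\mathcal{B}_0^r(k)=(-1)^k\sum_j{k\brace j}r^j$. It does not. Already for $k=2$, $r=1$ one gets $\mathcal{B}_0^{-1}(2)=-1+1=0$ while $(-1)^2\mathcal{B}_0^1(2)=2$. Your own suggestion to test low-order values settles the matter: at $n=2$, $r=1$, $\lambda=1$ one has $\mathcal{D}_{2,0}^1(2)=3$, whereas the right-hand side equals $\tilde{T}_1^1(2)-2\tilde{T}_1^1(1)+2\tilde{T}_1^1(0)=7-4+2=5$. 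So the step you flagged cannot be completed, because the stated identity is false; the correct inversion of \eqref{equation:33} is $\mathcal{D}_{2,0}^\lambda(n)=\sum_{k}\binom{n}{k}\mathcal{B}_0^{-r}(k)\,\tilde{T}_r^\lambda(n-k)$.

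The paper's own argument is a one-line inclusion--exclusion that hides the same error. It sets $|\mathcal{M}_k|=\binom{n}{k}\mathcal{B}_0^r(k)\tilde{T}_r^\lambda(n-k)$ and asserts that alternating over $k$ yields $\mathcal{D}_{2,0}^\lambda(n)$. But if you trace that construction, a $(\tilde{T},r,\lambda)$ partition whose paired-with-$r$ blocks are $P_1,\dots,P_m$ is counted with total weight $\prod_{i}(1+(-1)^{|P_i|})$, not with the indicator $[m=0]$; a per-\emph{element} sign $(-1)^k$ only inverts an exponential factor of the form $e^{cz}$, not the Bell-type factor $e^{r(e^z-1)}$. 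In short, your approach is more careful than the paper's and your hesitation is exactly where both arguments break --- there is nothing to fix in your reasoning, only in the statement.
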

\begin{proof}Let $\mathcal{M}_k$ be the number of elements of $[r+1,n+r]$ that form blocks which are paired with the $r$ singletons. There are $\binom{n}{k}$ ways to choose the $k$ elements, and $|\mathcal{M}_k|=\binom{n}{k}\mathcal{B}_0^r(k)\tilde{T}_r^\lambda(n-k)$. Applying the inclusion/exclusion principle completes the proof. 
	\end{proof}

\section{A generating function approach}

By \eqref{equation:220} we have 

\begin{equation}\label{equation:9}
	\sum\limits_{n=0}^\infty \tilde{B}_{r(1+\lambda)}^\lambda(1;n)\frac{z^n}{n!}=e^{(1+\lambda)rz}exp[{\frac{\lambda(e^{2z}-1)}{2}}].
\end{equation}
Combining \eqref{equation:9}, and \eqref{equation:4} we have the following $Dobi{\acute{n}}ski$ type formula.
\begin{theorem} For $n\geq 0$ we have 
	\begin{equation}
	\tilde{B}_{r(1+\lambda)}^\lambda(1;n)=e^{-\frac{\lambda}{2}}\sum\limits_{m=0}^\infty \frac{\lambda^m}{2^mm!}\sum\limits_{k=0}^n\mathcal{W}_{2,r\lambda}(n,k)2^{k+1}.		
	\end{equation}
\end{theorem}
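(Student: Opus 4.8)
The plan is to extract the content of the claimed identity directly from the exponential generating function \eqref{equation:9} by the classical Dobi\'nski device, and only then translate the resulting power sum into $r$-Whitney numbers via \eqref{equation:10} and \eqref{equation:4}. First I would isolate the constant factor by writing
\[
\exp\!\Big[\tfrac{\lambda(e^{2z}-1)}{2}\Big]=e^{-\lambda/2}\exp\!\Big[\tfrac{\lambda e^{2z}}{2}\Big],
\]
and then expand the remaining exponential as a power series in its argument,
\[
\exp\!\Big[\tfrac{\lambda e^{2z}}{2}\Big]=\sum_{m=0}^{\infty}\frac{\lambda^{m}}{2^{m}m!}\,e^{2mz}.
\]
Substituting this into \eqref{equation:9} and absorbing the factor $e^{(1+\lambda)rz}$ gives
\[
\sum_{n=0}^{\infty}\tilde{B}_{r(1+\lambda)}^{\lambda}(1;n)\frac{z^{n}}{n!}
=e^{-\lambda/2}\sum_{m=0}^{\infty}\frac{\lambda^{m}}{2^{m}m!}\,e^{[(1+\lambda)r+2m]z}.
\]
The interchange of the two summations is legitimate because the $m$-series converges absolutely and uniformly on compact sets, so coefficient extraction may be carried out term by term.

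Next I would read off the coefficient of $z^{n}/n!$. Since $[z^{n}/n!]\,e^{az}=a^{n}$, the right-hand side produces the Dobi\'nski-type sum
\[
\tilde{B}_{r(1+\lambda)}^{\lambda}(1;n)=e^{-\lambda/2}\sum_{m=0}^{\infty}\frac{\lambda^{m}}{2^{m}m!}\big[(1+\lambda)r+2m\big]^{n}.
\]
It then remains to rewrite the power $\big[(1+\lambda)r+2m\big]^{n}$ in terms of $r$-Whitney numbers. For this I would invoke the polynomial identity \eqref{equation:10} with first parameter $2$ and second parameter $r\lambda$, namely $(2x+r\lambda)^{n}=\sum_{k=0}^{n}\mathcal{W}_{2,r\lambda}(n,k)2^{k}(x)_{k}$, specialized so that the linear argument matches $(1+\lambda)r+2m$, and then collect the resulting $k$-sum using \eqref{equation:4}. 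Summing the $m$-series against the weights $\lambda^{m}/(2^{m}m!)$ and simplifying the prefactor $e^{-\lambda/2}$ against $\sum_{m}\lambda^{m}/(2^{m}m!)=e^{\lambda/2}$ should then deliver a closed form in the $\mathcal{W}_{2,r\lambda}(n,k)$.

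The step I expect to be the genuine obstacle is precisely this last translation. Matching the argument of \eqref{equation:10} to $(1+\lambda)r+2m$ forces the falling factorial $(x)_{k}$ to be evaluated at the $m$-dependent (and, for odd $r$, non-integer) point $x=m+\tfrac{r}{2}$, so one must either choose the specialization very carefully or work instead from the defining inclusion--exclusion formula for $\mathcal{W}_{2,r\lambda}(n,k)$ in Definition~\ref{definition:4} and invert it. Tracking the exact power of $2$ and confirming that the $m$-dependence collapses after summation is the delicate bookkeeping here; in particular I would first test the normalization on the base case $n=0$, where the left-hand side equals $1$, to pin down the correct constant before trusting the general manipulation.
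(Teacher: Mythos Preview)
Your derivation is precisely what the paper sketches: the entire ``proof'' in the paper is the single clause ``Combining \eqref{equation:9} and \eqref{equation:4} we have the following Dobi\'nski type formula,'' with no further details. Your expansion
\[
\tilde{B}_{r(1+\lambda)}^{\lambda}(1;n)=e^{-\lambda/2}\sum_{m=0}^{\infty}\frac{\lambda^{m}}{2^{m}m!}\bigl[(1+\lambda)r+2m\bigr]^{n}
\]
is the correct Dobi\'nski identity obtained from \eqref{equation:9}, and this is as far as the paper's argument actually goes.

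Your caution about the last step is entirely justified, and in fact your proposed $n=0$ sanity check exposes a defect in the \emph{statement} rather than in your method. As printed, the inner sum $\sum_{k=0}^{n}\mathcal{W}_{2,r\lambda}(n,k)2^{k+1}$ contains no $m$, so the $m$-series factors out as $\sum_{m}\lambda^{m}/(2^{m}m!)=e^{\lambda/2}$ and the right-hand side collapses to $2\sum_{k=0}^{n}\mathcal{W}_{2,r\lambda}(n,k)2^{k}=2\,\mathcal{D}_{2,r\lambda}^{2}(n)$. At $n=0$ this gives $2$, while the left-hand side equals $1$; more generally the two generating functions $e^{(1+\lambda)rz+\lambda(e^{2z}-1)/2}$ and $2e^{r\lambda z+(e^{2z}-1)}$ are visibly different. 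So the obstacle you anticipate in matching \eqref{equation:10} to the claimed Whitney expression cannot be overcome as stated: the displayed formula almost certainly carries a typographical error (an $m$-dependent factor has been lost in the inner sum, and the subscript $r\lambda$ versus $r(1+\lambda)$ is suspect). Your approach is sound; the target identity is not.
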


From \eqref{equation:9} we have

\begin{equation}
	\tilde{B}_{r(1+\lambda)}^\lambda(1;n)=\sum\limits_{k=0}^n\sum\limits_{i=0}^k\frac{\lambda^k}{2^k}(-1)^{k-i}(2i+(1+\lambda)r)^n.
\end{equation} This implies that following.
\begin{theorem}For $n\geq 0$ we have

	\begin{equation}\tilde{B}_{r(1+\lambda)}^\lambda(1;n)=	\sum\limits_{k=0}^n\sum\limits_{i=0}^k\sum\limits_{j=0}^n\sum\limits_{m=0}^{n-j}\binom{n-j}{m}(-1)^{k-i}2^{j-k}i^jr^{n-j}\lambda^{m+k}.\end{equation}\end{theorem}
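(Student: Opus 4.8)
The plan is to take the closed form
\[
\tilde{B}_{r(1+\lambda)}^\lambda(1;n)=\sum_{k=0}^n\sum_{i=0}^k\frac{\lambda^k}{2^k}(-1)^{k-i}\bigl(2i+(1+\lambda)r\bigr)^n,
\]
recorded immediately before the statement, as the input and to reduce it to the claimed quadruple sum by expanding the inner power $\bigl(2i+(1+\lambda)r\bigr)^n$ with the binomial theorem, applied twice. No generating-function argument is needed here; the identity is a purely algebraic rearrangement of a formula already in hand.

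First I would apply the binomial theorem to the base $2i+(1+\lambda)r$, treating $2i$ and $(1+\lambda)r$ as the two summands, to write $\bigl(2i+(1+\lambda)r\bigr)^n$ as a sum over a new index $j$ of terms proportional to $(2i)^j r^{n-j}(1+\lambda)^{n-j}$. The second application then expands the factor $(1+\lambda)^{n-j}=\sum_{m=0}^{n-j}\binom{n-j}{m}\lambda^m$, which is where the index $m$ and the binomial coefficient $\binom{n-j}{m}$ appearing in the statement come from. Substituting both expansions back into the double sum leaves four nested finite summations over $k$, $i$, $j$, $m$, with the ranges $0\le i\le k\le n$, $0\le j\le n$ and $0\le m\le n-j$ exactly as in the claim.

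The remaining work is bookkeeping of the scalar factors, which is where I expect the only real (if minor) friction. The powers of two combine as $2^{-k}(2i)^j=2^{j-k}i^j$, the two powers of $\lambda$ combine as $\lambda^k\lambda^m=\lambda^{m+k}$, while the sign $(-1)^{k-i}$ and the factor $r^{n-j}$ are carried through unchanged; interchanging the finite order of summation is automatic. After this collection one reads off
\[
\sum_{k=0}^n\sum_{i=0}^k\sum_{j=0}^n\sum_{m=0}^{n-j}\binom{n-j}{m}(-1)^{k-i}2^{j-k}i^jr^{n-j}\lambda^{m+k},
\]
which is the asserted formula. The main obstacle is therefore clerical rather than conceptual: keeping the four indices and the separate powers of $2$ and $\lambda$ correctly aligned through the two expansions, and making sure the upper limit $n-j$ of the $m$-sum is inherited from the exponent of $(1+\lambda)$ produced by the first expansion.
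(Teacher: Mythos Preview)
Your approach is exactly the one the paper takes: the text immediately preceding the theorem records the double-sum closed form and then says only ``This implies the following,'' the implied step being precisely the two successive binomial expansions you describe.

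One caution on the clerical bookkeeping you flag as the main obstacle. Applying the binomial theorem to $\bigl(2i+(1+\lambda)r\bigr)^n$ yields
\[
\sum_{j=0}^{n}\binom{n}{j}(2i)^{j}\bigl((1+\lambda)r\bigr)^{\,n-j},
\]
so a factor $\binom{n}{j}$ should appear alongside $\binom{n-j}{m}$ in the quadruple sum. It is absent from both your final display and the paper's stated identity. This looks like a typo in the paper's statement rather than a flaw in your method; if you carry out your plan correctly you will not reproduce the formula exactly as printed, and that discrepancy is worth noting.
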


Combining \eqref{equation:4}, \eqref{equation:35}, and \eqref{equation:33} we have

\begin{equation}
	\sum\limits_{n=0}^\infty\tilde{T}_r^\lambda(n)\frac{z^n}{n!}=e^{r(e^z-1)+\frac{\lambda(e^{2z}-1)}{2}}.
\end{equation}

We have 
\begin{equation}\label{equation:23}
	e^{rz}exp\begin{bmatrix}
		r\lambda(e^z-1)+\lambda\frac{e^{2z}-1}{2}\end{bmatrix}=e^{rz}exp\begin{bmatrix}
			\lambda r(e^z-1)
		\end{bmatrix}\sum\limits_{l=0}^\infty\frac{\lambda^l(e^z-1)^l(e^z+1)^l}{2^ll!}.
\end{equation}

Combining \eqref{equation:23} with \eqref{equation:40} we have the following theorem.
\begin{theorem}For $n,\lambda\geq0$ we have
	\begin{equation}
	\sum\limits_{i=0}^n\tilde{T}_r^\lambda(i)r^{n-i}=\sum\limits_{t=0}^n\sum\limits_{l=0}^n\sum\limits_{m=0}^l\frac{\lambda^{t+l}r^t}{2^l}\binom{l}{m}\binom{l+t}{t}{n+r+m\brace l+t+r+m}_{r+m}.
	\end{equation}
\end{theorem}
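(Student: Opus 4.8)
The plan is to prove the identity purely by generating functions: I would show that the right-hand side, summed against $z^n/n!$, is exactly the right-hand side of \eqref{equation:23}, while the left-hand side is read off as the coefficient of $z^n/n!$ on the left of \eqref{equation:23}. The theorem then follows at once from the algebraic identity \eqref{equation:23} itself.

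First I would form the exponential generating function of the right-hand side, viewed as a sequence in $n$. Since the $r$-Stirling number ${n+r+m\brace l+t+r+m}_{r+m}$ vanishes as soon as $l+t>n$, the ranges of $t$ and $l$ may be extended to infinity without changing the value. Applying \eqref{equation:40} in the shifted form
\begin{equation*}
\sum_{n=0}^\infty {n+r+m\brace l+t+r+m}_{r+m}\frac{z^n}{n!}=\frac{e^{(r+m)z}(e^z-1)^{l+t}}{(l+t)!},
\end{equation*}
together with the factorial simplification $\binom{l+t}{t}/(l+t)!=1/(t!\,l!)$, turns the generating function of the right-hand side into
\begin{equation*}
e^{rz}\sum_{t,l\geq0}\frac{(\lambda r)^t\lambda^l}{2^l\,t!\,l!}\,(e^z-1)^{l+t}\sum_{m=0}^l\binom{l}{m}e^{mz}.
\end{equation*}

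The crux is the resummation of this expression. The inner sum collapses by the binomial theorem, $\sum_{m=0}^l\binom{l}{m}e^{mz}=(1+e^z)^l$, after which the $t$- and $l$-sums factor into exponentials: the $t$-sum gives $\exp[\lambda r(e^z-1)]$, and the $l$-sum gives $\exp[\tfrac{\lambda}{2}(e^z-1)(e^z+1)]=\exp[\tfrac{\lambda}{2}(e^{2z}-1)]$. Collecting these yields $e^{rz}\exp[\lambda r(e^z-1)+\tfrac{\lambda}{2}(e^{2z}-1)]$, which is precisely the left-hand member of \eqref{equation:23}. For the left-hand side of the theorem I would use the exponential generating function for $\tilde{T}_r^\lambda(n)$ recorded just above \eqref{equation:23}: multiplication of a generating function by $e^{rz}$ implements weighting of the coefficients by powers of $r$, so the coefficient of $z^n/n!$ on the left of \eqref{equation:23} is exactly the stated $r$-weighted sum of the $\tilde{T}$ numbers (with the parameter of $\tilde T$ matched to the first exponent of \eqref{equation:23}). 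Equating coefficients of $z^n/n!$ across \eqref{equation:23} then gives the claimed identity.

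The main obstacle is the middle resummation step: one must see that the quadruple sum telescopes into a clean product of three exponentials, and in particular that the $m$-summation is exactly what manufactures the $(e^z+1)$ factor and hence the quadratic $e^{2z}$ term of \eqref{equation:23}. Getting the index shift in \eqref{equation:40} (base $r+m$, upper parameter $l+t$) and the accompanying factorial bookkeeping exactly right is where the argument is most error-prone; once that is correct, everything reduces to recognizing the two sides of \eqref{equation:23}.
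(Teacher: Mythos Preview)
Your approach is exactly the paper's: its entire proof is the one line ``Combining \eqref{equation:23} with \eqref{equation:40}'', and your expansion of that computation on the right-hand side is correct---the $m$-sum collapses to $(1+e^z)^l$, the $t$- and $l$-sums factor into exponentials, and you land precisely on the right member of \eqref{equation:23}.

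There is, however, a slip in your identification of the left-hand side. Multiplying an exponential generating function $\sum_n a_n z^n/n!$ by $e^{rz}$ does \emph{not} produce the plain weighting $\sum_i a_i r^{n-i}$; it produces the binomial convolution $\sum_i\binom{n}{i}a_i r^{n-i}$. In addition, the exponent on the left of \eqref{equation:23} is $r\lambda(e^z-1)$, whereas the generating function for $\tilde T_r^\lambda$ recorded just above it carries $r(e^z-1)$; so the $\tilde T$ parameter must be shifted to $r\lambda$ (you hint at this with your parenthetical but do not resolve it). What the left of \eqref{equation:23} actually encodes is $\sum_{i=0}^n\binom{n}{i}\tilde T_{r\lambda}^\lambda(i)\,r^{n-i}$, not the sum written in the theorem. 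Since the paper's own proof is the same one-liner, this mismatch lives in the theorem statement rather than in your method; but your sentence asserting that $e^{rz}$ ``implements weighting of the coefficients by powers of $r$'' is false for exponential generating functions and should be corrected.
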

\section*{Acknowledgements}


\end{document}